\documentclass[11pt]{article}
\usepackage{amssymb,amsmath,bm}
\usepackage[colorlinks=true, urlcolor=blue,linkcolor=blue, citecolor=blue]{hyperref}
\usepackage{mathrsfs,verbatim}
\usepackage{caption}
\usepackage{constants,color}
\usepackage{enumerate}
\usepackage{bbm}
\usepackage{tikz}
\usepackage{tkz-tab}
\usepackage{tikz-qtree}
\usepackage{appendix}
\topmargin -.75in \textwidth 6.5in \oddsidemargin -.05in \textheight
9.3in
\newcommand{\E}{\mathbb{E}}
\newcommand{\Pro}{\mathbb{P}}
\newcommand{\Var}{\mathrm{Var}}
\newcommand{\Cov}{\mathrm{Cov}}

\newcommand{\1}{\mathbf{1}}

\begin{document}
\newcommand{\bea}{\begin{eqnarray}}
\newcommand{\ena}{\end{eqnarray}}
\newcommand{\beas}{\begin{eqnarray*}}
\newcommand{\enas}{\end{eqnarray*}}
\newcommand{\beq}{\begin{equation}}
\newcommand{\enq}{\end{equation}}
\def\qed{\hfill \mbox{\rule{0.5em}{0.5em}}}
\newcommand{\bbox}{\hfill $\Box$}
\newcommand{\ignore}[1]{}
\newcommand{\ignorex}[1]{#1}
\newcommand{\wtilde}[1]{\widetilde{#1}}
\newcommand{\mq}[1]{\mbox{#1}\quad}
\newcommand{\bs}[1]{\boldsymbol{#1}}
\newcommand{\qmq}[1]{\quad\mbox{#1}\quad}
\newcommand{\qm}[1]{\quad\mbox{#1}}
\newcommand{\nn}{\nonumber}
\newcommand{\Bvert}{\left\vert\vphantom{\frac{1}{1}}\right.}
\newcommand{\To}{\rightarrow}
\newcommand{\supp}{\mbox{supp}}
\newcommand{\law}{{\cal L}}
\newcommand{\Z}{\mathbb{Z}}

\newcommand{\N}{\mathbb{N}}
\newcommand{\T}{\mathcal{T}}

\newcommand{\ucolor}[1]{\textcolor{blue}{#1}}  
\newcommand{\ucomm}[1]{\marginpar{\tiny\ucolor{#1}}}  

\newcommand{\ecolor}[1]{\textcolor{red}{#1}}  
\newcommand{\ecomm}[1]{\marginpar{\tiny\ecolor{#1}}}  

 \newtheorem{theorem}{Theorem}[section]
\newtheorem{corollary}{Corollary}[section]
\newtheorem{conjecture}{Conjecture}[section]
\newtheorem{proposition}{Proposition}[section]
\newtheorem{lemma}{Lemma}[section]
\newtheorem{definition}{Definition}[section]
\newtheorem{example}{Example}[section]
\newtheorem{remark}{Remark}[section]
\newtheorem{case}{Case}[section]
\newtheorem{condition}{Condition}[section]
\newcommand{\pf}{\noindent {\it Proof:} }
\newcommand{\proof}{\noindent {\it Proof:} }
\frenchspacing


\title{\bf  Asymptotic results on Hoppe trees and its variations}
\author{Ella Hiesmayr\footnote{Ko\c{c} University, Istanbul, Turkey. email: ehiesmayr17@ku.edu.tr} \hspace{0.2in} \"{U}m\.{i}t I\c{s}lak\footnote{Bo\u{g}azi\c{c}i University, Istanbul, Turkey. email: umit.islak1@boun.edu.tr}} \vspace{0.25in}

\maketitle

\begin{abstract}
A uniform recursive tree on $n$ vertices is a random tree where each possible $(n-1)!$ labeled recursive rooted tree is selected with equal probability. In this paper we introduce and study weighted trees, a non-uniform recursive tree model departing from the recently introduced Hoppe trees. This class generalizes both uniform recursive trees and Hoppe trees. The generalization provides diversity among the nodes, making the model more flexible for applications. We also analyze the number of leaves, the height, the depth, the number of branches, and the size of the largest branch in these weighted trees. 
\bigskip

Keywords: Uniform recursive trees, Hoppe trees, Ewens sampling formula, random permutations, coupling, random tree statistics

\bigskip

AMS Classification: 05C80, 60C05
\end{abstract}

\section{Introduction}\label{sec:intro}

A uniform recursive tree (URT) on $n$ vertices is a rooted random recursive tree where each possible $(n-1)!$ distinct trees has the same probability of appearing. Another way of looking at uniform recursive trees is that one starts constructing the tree with only the root (node 1) and node 2 attached to the root. Afterwards, at each step $k = 3,\ldots,n$, node $k$ connects to one of the prior nodes $j$ with equal probability $1 / (k-1)$. A detailed survey on URTs can be found in \cite{Mahmoud}, and a book length treatment of random trees can be found in \cite{Drmota09}. 

URTs, although pretty simple to construct, have been used in various applications.  These include but are not restricted to the spread of epidemics \cite{Feng05},  determining the genealogy of ancient and medieval texts \cite{NajockHeyde82}, analyzing pyramid schemes \cite{Gastwirth}, and the spread of a fire in a tree \cite{Fire}. Though these investigations use uniformity in their models, having different distributions would provide a lot more flexibility to the  researcher.  Using URTs implies that all nodes are identical in a certain sense,   for example, in the spread of epidemics, that every infected person is equally likely to infect the next one, or in the study of medieval texts, that every book is equally likely to be copied. This is obviously not the case in real world applications. 

Parallel to the development of the theory of uniform recursive trees, various other recursive tree structures have already been studied. One of the most well known are binary recursive trees, which are described in \cite{Flajolet}. Other non-uniform recursive tree models include  plane-oriented recursive trees \cite{Szymanski87}, scaled attachment random recursive trees \cite{Devroye11} and biased recursive trees \cite{Altokislak}. 

Our interest here is on another    natural generalization, Hoppe trees,  that was recently considered in \cite{Hoppe}. There, the root is assigned a weight $\theta$, all other nodes get weight 1. Node $i$ then attaches to the root with probability $\frac{\theta}{\theta+i-2}$ and to any other node with probability $\frac{1}{\theta+i-2}$. This model is associated to Hoppe's urn, which has an application in modelling the alleles of a gene with mutation rate $\theta>0$. Concerning many properties like the number of leaves, the height and the depth of node $n$, Hoppe trees behave similarly to uniform recursive trees in an asymptotic sense.

The  model  in this paper generalizes the idea of Hoppe trees: we  assign every node a weight $w_i$. Node $j$ then attaches to node $1 \leq i<j$ with probability $\frac{w_i}{w_1 + \cdots + w_{j-1}}$.  We call the resulting tree construction a weighted recursive tree (WRT).

Introducing weights is also interesting from the point of view of applications since it allows to introduce diversity among the nodes. In  other non-uniform models  discussed above, all nodes have the same behaviour, or in other words attract nodes according to the same rule. When a recursive process does not satisfy such conditions, weighted recursive trees can be used to model it more precisely.  Moreover the properties of weighted recursive trees and how much they differ from the uniform model can be interpreted as an indicator for the stability of a process. It is reasonable to assume that it is in general more probable for some nodes to get children as others. For example some persons might be more likely to infect others, some copies of ancient texts are more probable to have been copied again and some people might be more likely to recruit new people. Thus it is interesting to see how much fluctuation  in the attachment probabilities can be tolerated when using the uniform model.  

Below, for  the  generalized model of this note we first give a coupling construction to construct a WRT  on $n$ nodes from a  Hoppe  tree. This allows us to understand statistics such as the height of the resulting random tree. We  then study the number of branches and the depth of node $n$ and give their expectation and variance, as well as some conditions under which asymptotic normality holds. We moreover derive explicit values for the expectation and the variance for certain examples of weight sequences.

The rest of the paper is organized  as follows. Next section introduces a coupling used to construct a  WRT from a Hoppe tree.  Section \ref{sec:couplingapplications} applies the coupling construction for an analysis of the number of leaves, the height and the size of the largest branch.  In Section \ref{sec:largestbranch}, we relate Hoppe trees to Hoppe permutations, and use the coupling construction of Section \ref{sec:coupling} to study the size of largest branch in WRTs. 
   In Sections \ref{sec:depth} and \ref{sec:branches}, we study the depth of a WRT and  the number of branches in WRT. 
   
\section{A useful coupling construction}\label{sec:coupling}

\subsection{Constructing WRTs from URTs}
\label{rem:URT2mkRT} We will first introduce a coupling allowing us to construct a special kind of WRT from a URT. We will not use this coupling in analysis of random tree statistics, because the second coupling construction, that is to introduced below,   can be applied to a more general class of WRTs. We still wanted to introduce this version because it is based on not Hoppe trees but URTs, a much better studied structure, as a starting point.

 We will call trees that have a weight sequence such that the first $k$ nodes have a constant weight equal to $\theta$ and all other nodes have weight 1, $\theta^k$-RTs. It is possible to construct a $\theta^k$-RT from a URT by a coupling construction when $\theta \in \mathbb{N}^+$. To emphasize this assumption we will use $m$ instead of $\theta$ in this part. In particular we can construct Hoppe trees for which the weight of the root is a natural number by this coupling. 
To avoid confusion let us denote the nodes in the URT by $i$ and the nodes in the reconstructed tree by $i^*$. 

The coupling construction in that case goes as follows: First construct a URT with $mk + n-k$ nodes. We write $\mathcal{T}_{mk+n-k}$ for this URT. Since we want the weight of the first $k$ nodes to be $m$ we then join several nodes into one in the following way:
\begin{itemize}
\item[\boldmath$\cdot$] Nodes $1, \dots, m$ become node $1^*$,
\item[\boldmath$\cdot$] $m+1, \dots, 2m$ become node $2^*$ \dots 
\item[\boldmath$\cdot$] $(k-1)m +1, \dots, km$ become node $k^*$.
\end{itemize}

The new node $i^*$ gets all the children of $(i-1)m+1, \dots, im$. But since we joined several nodes into one and the nodes $(j-1)m +1, \dots, jm$ might have different parents, for $1<j\leq k$, we set the parent of $j^*$ as the parent of $(j-1)m+1$, i.e. of the node with the smallest label among those that become $j^*$. Thus, if in the URT the parent of $(j-1)m+1$ is any of the nodes $(i-1)m+1, \dots, im$, the parent of $j^*$ is $i^*$.

For $j >k$, we set $j^* = j+k(m-1)$, so all nodes after $k$ only correspond to a single node, we just need to "translate" the names of the nodes to take into account that we used $mk$ nodes instead of $k$ for the first $k$ nodes in the reconstructed tree. If for $j>k$ the parent of node $j+k(m-1)$  is among the first $km$ nodes of the URT, we check into which range this node falls and the parent of $j^*$ is chosen as above. In other words if, for $1 \leq i \leq k$, the parent of node $j+k(m-1)$ is one of $(i-1)m+1, \dots, im$, the parent of node $j^*$ is node $i^*$. If the parent of node $j+k(m-1)$ is equal to $h+k(m-1)$ with $h>k$, the parent of node $j^*$ is node $h^*$.
We call the tree we thus obtain $\mathcal{T}_n^{m^k}$.
It can be easily verified that the obtained attachment probabilities correspond to the ones of an $m^k$-tree, for details see \cite{ellatez}.

We now show, as an example, how the coupling can be used to study the number of leaves, i.e. the number of nodes without children, of a $\theta^k$-RT. Let $\mathcal{L}_{mk+n-k}$ denote the number of leaves of $\mathcal{T}_{mk+n-k}$ and $\mathcal{L}_n^{m^k}$ denote the number of leaves of $\mathcal{T}_{n}^{m^k}.$
Then $\mathcal{L}_{mk+n-k}$ can be used to bound $\mathcal{L}_n^{m^k}$. First of all if a node $i > km $ is a leaf in $\mathcal{T}_{mk+n-k}$, the corresponding node in $\mathcal{T}_n^{m^k}$, which is ${i-k(m-1)}^*$, is also a leaf.  The reconstruction process thus only affects the children of the nodes $i^*$ with $1 \leq i \leq k$ and the root cannot be a leaf, so we can have at most $k-1$ additional leaves. Moreover for each $2 \leq i \leq k$ we can at most "loose" $m-1$ leaves since if all $(i-1)m+1, \dots, im$ are leaves in $\mathcal{T}_{mk+n-k}$, $i^*$ will be a leaf in $\mathcal{T}_n^{m^k}$ too.  Hence we can conclude that 
\begin{equation*}
\mathcal{L}_{mk+n-k}-k(m-1)< \mathcal{L}_n^{m^k} <\mathcal{L}_{mk+n-k}+ k-1  .
\end{equation*}
Together with results about the expected number of leaves of URTs this implies after some simple manipulations that 
\begin{equation*}
\begin{split}
 \left |\E[\mathcal{L}_n^{m^k}]-\E[\mathcal{L}_n] \right | \leq \frac{k(m+1)}{2}, \\
\end{split}
\end{equation*}
where $\mathcal{L}_n$ denotes the number of leaves in a URT on $n$ nodes. It is possible to derive other results from this coupling, but since the second coupling we now present is more general, we will not go further into it here.


\subsection{Constructing WRTs from Hoppe trees}

We will now introduce a coupling construction for WRTs whose nodes have constant weight after a certain index, a class similar to, but more general, than $\theta^k$-RTs. Let $\T_n^w$ be a WRT and $(w_i)_{i \in \N}$, the weight sequence of $\T_n^w$, be such that there is a $k \in \N$ such that $w_i=1$ for all $i>k$. Then we can construct $\T_n^w$ from a Hoppe tree with root weight $\theta=\sum_{i=1}^{k} w_i$ by a  coupling construction, more precisely by splitting the root into $k$ nodes. To avoid confusion we will write $i$ for node $i$ in the Hoppe tree and $i^*$ for node $i$ in the reconstructed tree.

We now describe the coupling construction: First construct a Hoppe tree on $n-k+1$ nodes and with $\theta$, the weight of the root, equal to $\sum_{i=1}^{k} w_i$. Then construct a WRT of size $k$ corresponding to $(w_i)_{i \in \mathbb{N}}$. Now we replace the root of the Hoppe tree by this weighted recursive tree of size $k$ in the following way: Node $1^*$, $2^*$, \dots, $k^*$ are the nodes of the WRT of size $k$ we just constructed.
For $i \geq 2$, node $i$ in the Hoppe tree becomes node $i+k-1^*$ in the reconstructed tree, so we shift the names of the rest of the nodes by $k-1$. Then for all $i \geq 2$, if $i$ is a child of 1 in the Hoppe tree, $i+k-1^*$ becomes a child of one of the nodes $1^*, \dots, k^*$ in the reconstructed tree, proportionally to their weights. This means that if $i$ is a child of 1 in the Hoppe tree, for $1\leq j \leq k$, node $i+k-1^*$ will become a child of a node $j^*$  in the reconstructed tree with probability $\frac{w_{j}}{\sum_{\ell=1}^{k} w_{\ell}}$. 

Let us check that this gives the attachment probabilities corresponding to the WRT we aim to construct.
\begin{itemize}
\item[\boldmath{$\cdot$}] For $1\leq i \leq k<j$,
\begin{equation*}
\begin{split}
\Pro & (j^* \text{ is child of } i^*)  = \Pro(j-k+1 \text{ is child of } 1, i^*\text{  is chosen as the parent of } j^*) \\
&= \frac{\sum_{\ell=1}^{k} w_{\ell}}{j-k+1 -2 +\sum_{\ell=1}^{k} w_{\ell}} \frac{w_i}{\sum_{\ell=1}^{k} w_{\ell}} = \frac{w_i}{j-1 -k+\sum_{\ell=1}^{k} w_{\ell} }. \\
\end{split}
\end{equation*}
\item[\boldmath{$\cdot$}] For $k<i<j$, 
\begin{equation*}
\begin{split}
\Pro & (j^* \text{ is child of } i^*)
 = \Pro(j-k+1 \text{ is child of } i-k+1)\\
&= \frac{1}{j-k+1 -2 +\sum_{\ell=1}^{k} w_{\ell}} = \frac{1}{j-1-k+\sum_{\ell=1}^{k} w_{\ell}}. \\
\end{split}
\end{equation*}
\end{itemize}

The following two sections will be using the coupling construction just described.

\section{Use of the coupling in WRT statistics}\label{sec:couplingapplications}

In this section we apply the coupling construction of the previous section to study the number of leaves in a WRT and the height of a Hoppe tree. Later, in Section \ref{sec:largestbranch}, the coupling construction will also be used in order to understand the size of the largest branch in a WRT. 

\subsection{Number of leaves}

A node in a tree with degree one is said to be a leaf. 
Focusing on the number of leaves, the reconstruction process in our coupling construction implies that all the leaves of the Hoppe tree are still leaves in the reconstructed tree, since we do not change any relation among the nodes $2, \dots, n-k+1$ of the Hoppe tree or respectively $k+1^*, \dots n^*$ of the reconstructed tree. There can be at most $k-1$ additional leaves among the first $k$ nodes. Thus we can bound the number of leaves $ \mathcal{L}_n^{w} $ of $\T^w$ by the number of leaves  $\mathcal{L}_{n}^{\theta}$ of $\T^\theta$:
\begin{equation}  \label{eq:CouplingLeaves}
\mathcal{L}_{n-k+1}^{\theta} \leq \mathcal{L}_n^{w} \leq  \mathcal{L}_{n-k+1}^{\theta} +k-1. \\
\end{equation}. 

In \cite{Hoppe} the following results about the leaves of Hoppe trees are given.

\begin{theorem}[\cite{Hoppe}] \label{thm:HoppeLeaves}
Let $\mathcal{L}_n^{\theta}$ denote the number of leaves of a Hoppe tree with $n \geq 2 $ nodes. Then
\begin{equation}
\begin{split}
&\E [\mathcal{L}_n^{\theta}] = \frac{n}{2} + \frac{\theta -1}{2} + \mathcal{O}\left (\frac{1}{n} \right), \\
&\Var (\mathcal{L}_n^{\theta}) = \frac{n}{12} + \frac{\theta -1}{12} + \mathcal{O}\left (\frac{1}{n} \right ), \\
& \Pro(|\mathcal{L}_n^{\theta} - \E[\mathcal{L}_n^{\theta}]| \geq t) \leq 2 e^{-\frac{6t^2}{n+\theta+1}} \text{ for all } t>0 \text{ and } \\
&\frac{\mathcal{L}_n^{\theta} - \E[\mathcal{L}_n^{\theta}]}{\sqrt{\Var\left (\mathcal{L}_n^{\theta}\right)}} \xrightarrow[d]{n \to \infty} \mathcal{G}.
\end{split}
\end{equation}
\end{theorem}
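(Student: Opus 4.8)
The plan is to reduce the entire theorem to one structural fact: $\mathcal{L}_n^\theta$ is distributed as a sum of independent Bernoulli random variables. Granting that, the CLT is immediate from Lindeberg's theorem and the tail bound from a Bernstein-type inequality, while the two moments come from an explicit recursion. To set up, note that for $j\ge 3$ the root (the only vertex of weight $\neq 1$) already has at least two descendants, hence is never a leaf, so the vertex inserted at step $j$ attaches to a current leaf with probability $\mathcal{L}_{j-1}^\theta/(\theta+j-2)$; therefore
\[
\mathcal{L}_j^\theta = \mathcal{L}_{j-1}^\theta + 1 - B_j,\qquad B_j\mid\mathcal{F}_{j-1}\sim\operatorname{Ber}\!\Big(\tfrac{\mathcal{L}_{j-1}^\theta}{\theta+j-2}\Big),\qquad \mathcal{L}_2^\theta=1 .
\]
Translating to probability generating functions $P_j(z)=\E[z^{\mathcal{L}_j^\theta}]$ gives $P_j(z)=\frac{z}{\theta+j-2}\big[(\theta+j-2)P_{j-1}(z)+(1-z)P_{j-1}'(z)\big]$ with $P_2(z)=z$. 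Differentiating at $z=1$ produces closed recursions for $\E[\mathcal{L}_j^\theta]$ and for the second factorial moment which, solved with an integrating factor, give the stated $\frac n2+\frac{\theta-1}{2}+\mathcal{O}(1/n)$ and $\frac n{12}+\frac{\theta-1}{12}+\mathcal{O}(1/n)$; alternatively, and faster, one writes $\mathcal{L}_n^\theta=\sum_{i=2}^n X_i$ with $X_i$ the indicator that $i$ is a leaf, and uses that $\Pro(X_i=1)$ and $\Pro(X_i=X_j=1)$ are telescoping products, so every sum needed is elementary.

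The core step is to prove by induction on $j$ that $P_j$ is real-rooted; since it is the generating function of a nonnegative integer-valued variable, real-rootedness forces all roots into $(-\infty,0]$, in particular strictly below $1$. Assuming this for $P_{j-1}$, of degree $j-2$, with (after a routine perturbation making the roots simple) roots $r_1<\dots<r_{j-2}<1$, consider $L\colon P\mapsto(\theta+j-2)P+(1-z)P'$. The degree is not lowered, because the leading terms cancel only when $\theta+j-2=j-2$, excluded by $\theta>0$. At each $r_i$ one has $LP_{j-1}(r_i)=(1-r_i)P_{j-1}'(r_i)$, of sign $(-1)^{\,j-2-i}$ since $1-r_i>0$ and $P_{j-1}'$ alternates in sign at the ordered roots; hence $LP_{j-1}$ changes sign on each of the $j-3$ intervals $(r_i,r_{i+1})$, and comparing its signs at $-\infty$ and at $r_1$ yields one more root below $r_1$. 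That is $j-2=\deg LP_{j-1}$ real roots, so $LP_{j-1}$, and therefore $P_j=\frac{z}{\theta+j-2}LP_{j-1}$, is real-rooted; multiple roots pass to the limit by the standard argument since the leading coefficient never vanishes. By the classical characterization — a polynomial with nonnegative coefficients and value $1$ at $z=1$ is the generating function of a sum of independent $\{0,1\}$ variables iff it is real-rooted — we get $\mathcal{L}_n^\theta\stackrel{d}{=}\sum_{i=1}^{n-1}\xi_i$ with the $\xi_i$ independent Bernoulli (one degenerate at $1$, matching $\mathcal{L}_n^\theta\ge1$).

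From this representation both limit statements are routine: the summands are bounded and $\Var(\mathcal{L}_n^\theta)$ grows like $n$, so Lyapunov's condition holds trivially and $(\mathcal{L}_n^\theta-\E[\mathcal{L}_n^\theta])/\sqrt{\Var(\mathcal{L}_n^\theta)}\to\mathcal{G}$; and a Bernstein/Bennett inequality for sums of independent $[0,1]$ variables gives $\Pro(|\mathcal{L}_n^\theta-\E[\mathcal{L}_n^\theta]|\ge t)\le 2\exp(-t^2/(2(v_n+t/3)))$ with $v_n=\Var(\mathcal{L}_n^\theta)$, and inserting $v_n\le(n+\theta+1)/12$ (valid for large $n$, the slack absorbing the $\mathcal{O}(1/n)$ term and the switch from $\theta-1$ to $\theta+1$) recovers $2\exp(-6t^2/(n+\theta+1))$ on the range of $t$ where the estimate is not already vacuous. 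I expect the genuine obstacle to be precisely the real-rootedness step — tracking signs and the non-vanishing leading coefficient under $L$ — with a secondary nuisance being the exact numerical constant in the tail bound, where one must watch the interplay between the Bernstein correction term and the admissible range of $t$. (If one wishes to avoid real-rootedness, the Doob martingale of the parent choices $p_2,\dots,p_n$ has unit-bounded increments, giving concentration via Azuma–Freedman, and a martingale CLT then gives asymptotic normality once the sum of conditional variances is shown to concentrate near $n/12$; this route is heavier.)
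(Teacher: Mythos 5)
This theorem is not proved in the paper at all: it is quoted from \cite{Hoppe}, where the expectation, variance, tail bound and CLT for the number of leaves are obtained by a martingale argument (the paper's own remark after Theorem \ref{thm:WRTLeaves} alludes to exactly that method). Your route is genuinely different and, for three of the four claims, sound. The recursion $\mathcal{L}_j^\theta=\mathcal{L}_{j-1}^\theta+1-B_j$ with $B_j\mid\mathcal{F}_{j-1}\sim\operatorname{Ber}(\mathcal{L}_{j-1}^\theta/(\theta+j-2))$ is correct (the root is never a leaf for $j\geq 3$ simply because node $2$ is its child; ``at least two descendants'' is not needed), and it yields the stated PGF recursion. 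Your sign-change induction for real-rootedness works: the leading coefficient of $(\theta+j-2)P_{j-1}+(1-z)P_{j-1}'$ is $\theta$ times that of $P_{j-1}$, the alternating signs at the $j-2$ (perturbed, simple) roots give $j-3$ interlacing roots, the comparison with the sign at $-\infty$ gives one more, and Hurwitz-type passage to the limit handles multiple roots. The classical factorization of a real-rooted PGF into Bernoulli factors then gives $\mathcal{L}_n^\theta\stackrel{d}{=}\sum\xi_i$, from which the CLT follows by Lyapunov for the triangular array once $\Var(\mathcal{L}_n^\theta)\to\infty$; the two moment asymptotics are routine from the recursion (or the indicator products) and agree with the exact formula recorded in Remark \ref{rem:leavesProb} with $k=1$. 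This is an attractive alternative to the martingale proof and even gives more than the cited statement.

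The genuine gap is the concentration inequality. Bernstein gives $2\exp\bigl(-t^2/(2v_n+2t/3)\bigr)$ with $2v_n=\tfrac{n+\theta-1}{6}+\mathcal{O}(1/n)$, and this is at most $2\exp\bigl(-6t^2/(n+\theta+1)\bigr)$ only when $\tfrac{2t}{3}\leq\tfrac{(n+\theta+1)-(n+\theta-1)}{6}-\mathcal{O}(1/n)$, i.e.\ for $t\leq\tfrac12-\mathcal{O}(1/n)$, which is exactly the range where the claimed bound exceeds $1$ and is vacuous. In the meaningful range the implication fails, and for $t\asymp n$ it fails by a constant factor in the exponent (at $t=n/2$ Bernstein gives roughly $e^{-n/2}$ against the claimed $e^{-3n/2}$), so no amount of ``slack'' between $\theta-1$ and $\theta+1$ absorbs the $+t/3$ term. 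Nor can you fall back on a generic sub-Gaussian bound for independent Bernoullis with proxy equal to the variance plus a constant: the optimal proxy of a $\operatorname{Ber}(p)$ variable is the Kearns--Saul constant $\tfrac{1-2p}{2\ln((1-p)/p)}$, which is much larger than $p(1-p)$ for small $p$, and your representation gives no control on the individual parameters $p_i$ coming from the roots of $P_n$. To obtain the stated inequality with these constants, for all $t>0$ and all $n\geq2$, you need either the Doob-martingale/Azuma argument of \cite{Hoppe}, in which the squared increment bounds sum to $(n+\theta+1)/12$, or a direct Chernoff/moment-generating-function estimate tailored to this model; as written, the third assertion of the theorem is not established.
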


Using the above theorem we can thus derive results on the number of leaves of WRTs whose nodes having constant weight after a certain index.

\begin{theorem} \label{thm:WRTLeaves}
Let $\mathcal{L}^{w}_{n}$ denote the number of leaves of a WRT of size $n$ with weight sequence $(w_i)_{i \in \mathbb{R}}$ such that there is a $k \in \mathbb{N}$ such that for all $i>k$ we have $w_i = 1$. Then
\begin{equation}
\begin{split}
&\E [\mathcal{L}_n^{w}] = \frac{n}{2} + C + \mathcal{O} \left ( \frac{1}{n} \right ) \text{ with } |C| \leq \frac{\sum_{i=1}^{k} w_i +k}{2},\\
&\Var(\mathcal{L}_n^{w})) = \frac{n}{12} + \mathcal{O} (\sqrt{n}), \\
& \Pro (|\mathcal{L}_n^{w} - \E[\mathcal{L}_n^{w}]|\geq t) \leq 2 e^{- \frac{6(t-2k+2)^2}{n-k+2+ \sum_{i=1}^k w_i}} \text{ for all } t>0 \text{ and } \\
& \frac{\mathcal{L}^{w}_{n} - \E[\mathcal{L}^{w}_{n}]}{\Var(\mathcal{L}^{w}_{n})} \longrightarrow_d \mathcal{G} \text{ as } n \to \infty.
\end{split}
\end{equation}
\end{theorem}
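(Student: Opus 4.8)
The plan is to transfer each of the four statements in Theorem~\ref{thm:HoppeLeaves} about Hoppe trees across the sandwich bound \eqref{eq:CouplingLeaves}, namely $\mathcal{L}_{n-k+1}^{\theta} \le \mathcal{L}_n^{w} \le \mathcal{L}_{n-k+1}^{\theta}+k-1$ with $\theta=\sum_{i=1}^k w_i$. Since $k$ is a fixed constant (not growing with $n$), the difference $\mathcal{L}_n^{w}-\mathcal{L}_{n-k+1}^{\theta}$ lies in $\{0,1,\dots,k-1\}$ and is $\mathcal{O}(1)$, so all the first-order asymptotics should pass through unchanged.

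\textbf{Expectation.} Taking expectations in \eqref{eq:CouplingLeaves} gives $\E[\mathcal{L}_{n-k+1}^{\theta}] \le \E[\mathcal{L}_n^{w}] \le \E[\mathcal{L}_{n-k+1}^{\theta}]+k-1$. Substituting the Hoppe-tree formula $\E[\mathcal{L}_{n-k+1}^{\theta}] = \frac{n-k+1}{2}+\frac{\theta-1}{2}+\mathcal{O}(1/n)$ and writing everything as $\frac{n}{2}+C+\mathcal{O}(1/n)$, one reads off that $C$ lies in an interval of length $k-1$ centered near $\frac{\theta-k}{2}$; a crude bound then yields $|C|\le \frac{\theta+k}{2} = \frac{\sum_{i=1}^k w_i + k}{2}$. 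This is routine.

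\textbf{Variance.} Here I would use the general fact that if $X \le Y \le X+c$ with $c$ constant, then $|\sqrt{\Var Y}-\sqrt{\Var X}| \le $ (something $\mathcal{O}(c)$), or more simply bound $|\Var Y - \Var X|$ by $\mathcal{O}(\sqrt{\Var X}+c^2)$ via $\Var Y = \Var X + 2\Cov(X,Y-X) + \Var(Y-X)$ and Cauchy--Schwarz, with $0\le Y-X\le k-1$. Since $\Var(\mathcal{L}_{n-k+1}^{\theta}) = \frac{n}{12}+\mathcal{O}(1)$, this gives $\Var(\mathcal{L}_n^{w}) = \frac{n}{12}+\mathcal{O}(\sqrt n)$, matching the claimed error term. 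The $\mathcal{O}(\sqrt n)$ (rather than $\mathcal{O}(1)$) is exactly the price of not controlling the covariance term better.

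\textbf{Concentration.} Apply the Hoppe-tree tail bound to $\mathcal{L}_{n-k+1}^{\theta}$. Using \eqref{eq:CouplingLeaves} twice, the event $|\mathcal{L}_n^{w}-\E[\mathcal{L}_n^{w}]|\ge t$ is contained in an event of the form $|\mathcal{L}_{n-k+1}^{\theta}-\E[\mathcal{L}_{n-k+1}^{\theta}]|\ge t'$ where $t' \ge t - 2(k-1)$ (absorbing the at most $k-1$ shift in $\mathcal{L}$ and the at most $k-1$ shift in the two expectations). Plugging $t' = t-2k+2$ and $n \mapsto n-k+1$, $\theta = \sum w_i$ into the Hoppe bound $2e^{-6t'^2/((n-k+1)+\theta+1)}$ gives precisely the stated inequality $2e^{-6(t-2k+2)^2/(n-k+2+\sum_{i=1}^k w_i)}$.

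\textbf{CLT.} Since $\mathcal{L}_n^{w} = \mathcal{L}_{n-k+1}^{\theta} + R_n$ with $0\le R_n \le k-1$ bounded, and $\sqrt{\Var(\mathcal{L}_{n-k+1}^\theta)}\to\infty$, the normalized variables $(\mathcal{L}_n^{w}-\E[\mathcal{L}_n^{w}])/\sqrt{\Var(\mathcal{L}_n^{w})}$ differ from $(\mathcal{L}_{n-k+1}^{\theta}-\E[\mathcal{L}_{n-k+1}^{\theta}])/\sqrt{\Var(\mathcal{L}_{n-k+1}^{\theta})}$ by a term converging to $0$ in probability (bounded numerator, diverging denominator) plus a scaling factor $\sqrt{\Var(\mathcal{L}_{n-k+1}^\theta)/\Var(\mathcal{L}_n^w)}\to 1$ by the variance estimate; Slutsky's theorem then gives convergence to $\mathcal{G}$ from the Hoppe-tree CLT. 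The only genuinely delicate point is making the variance comparison tight enough that the ratio of variances tends to $1$ — everything else is bookkeeping around the constant $k$. (Note the theorem statement writes $\Var$ rather than $\sqrt{\Var}$ in the denominator of the CLT, which I take to be a typo for the standard deviation.)
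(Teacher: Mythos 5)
Your proposal is correct and follows essentially the same route as the paper: transfer each statement of Theorem \ref{thm:HoppeLeaves} through the coupling bound \eqref{eq:CouplingLeaves}, with the variance handled via the bounded difference $Y=\mathcal{L}_n^{w}-\mathcal{L}_{n-k+1}^{\theta}$ and Cauchy--Schwarz, the tail bound via a triangle-inequality shift of $t$ by $2k-2$, and the CLT via Slutsky with the bounded remainder killed by the diverging variance. Your reading of the $\Var$ in the CLT denominator as a typo for the standard deviation also matches what the paper actually proves.
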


\begin{proof}
First of all for the expected value we get from Theorem \ref{thm:HoppeLeaves} and \eqref{eq:CouplingLeaves}.
\begin{equation*}
\begin{split}
&\mathcal{L}_{n-k+1}^{\theta} \leq \mathcal{L}_n^{w} \leq  \mathcal{L}_{n-k+1}^{\theta} +k-1 \\
& \Rightarrow \frac{n-k+1}{2} + \frac{\sum_{i=1}^{k} w_i-1}{2} + \mathcal{O}\left ( \frac{1}{n} \right ) \leq \E\left [\mathcal{L}_n^{w}\right ] \\
& \hspace{7em} \leq \frac{n-k+1}{2} + \frac{\sum_{i=1}^{k} w_i-1}{2} + k -1 + \mathcal{O}\left ( \frac{1}{n} \right ) \\
& \Rightarrow \frac{n}{2} + \frac{\sum_{i=1}^{k} w_i-k}{2} + \mathcal{O}\left ( \frac{1}{n} \right ) \leq \E \left [\mathcal{L}_n^{w}\right ] \leq \frac{n}{2} + \frac{\sum_{i=1}^{k} w_i+k-2}{2} + \mathcal{O}\left ( \frac{1}{n} \right ) \\
& \Rightarrow \E\left [\mathcal{L}_n^{w}\right ] = \frac{n}{2} + C + \mathcal{O}\left ( \frac{1}{n} \right ), 
\end{split}
\end{equation*}
where $C$ depends on $k$ and $w_1, \dots, w_k$, and we have $|C| \leq \frac{\sum_{i=1}^{k} w_i +k}{2}$.

Equation \eqref{eq:CouplingLeaves} also directly gives a concentration inequality. We have that

\begin{equation*}
\begin{split}
\Pro & (|\mathcal{L}_n^{w} - \E[\mathcal{L}_n^{w}]|\geq t) \\
& \leq \Pro( |\mathcal{L}_n^{w} - \mathcal{L}_{n-k+1}^{\theta} | +  |\mathcal{L}_{n-k+1}^{\theta} - \E  [\mathcal{L}_{n-k+1}^{\theta}  ] | +  | \E [\mathcal{L}_{n-k+1}^{\theta} ] - \E  [\mathcal{L}_n^{w}  ]  |\geq t)  \\
& \leq \Pro(|\mathcal{L}_{n-k+1}^{\theta} - \E  [\mathcal{L}_{n-k+1}^{\theta} ]  | \geq t-2k+2)  \\
& \leq 2 e^{- \frac{6(t-2k+2)^2}{n-k+2+ \sum_{i=1}^k w_i}}.
\end{split}
\end{equation*}

The coupling also gives us an approximation for the variance. Let $Y$ denote the number of additional leaves among the first $k$ nodes in the reconstructed tree. Then $\Var(Y) = \mathcal{O}(k)$ since $Y \leq k$. Since
$\Var(\mathcal{L}_n^{w}) = \Var(\mathcal{L}_{n-k+1}^{\theta} +Y ) = \Var(\mathcal{L}_{n-k+1}^{\theta}) + \Var(Y) + \Cov(\mathcal{L}_{n-k+1}^{\theta} ,Y)$
we get by the Cauchy-Schwarz inequality
\begin{equation*}
\Var(\mathcal{L}_n^{w})) = \frac{n}{12} + \mathcal{O} (\sqrt{n}).
\end{equation*}

In a similar way, one can make conclusions about the  asymptotic distribution. For this we will need Slutsky's theorem:
Let $X_n$ and $Y_n$ be sequences of random variables such that $X_n \to_d X$ and $Y_n \to_d c$ for $c \in \mathbb{R}$. Then
$$ \lim_{n\to \infty} X_n + Y_n  =_d  X +c.$$

In order to derive a central limit theorem for $\mathcal{L}^{w}_{n}$, we write 
\begin{equation*}\frac{\mathcal{L}^{w}_{n}- \E[\mathcal{L}_{n}^w]}{\sqrt{\Var(\mathcal{L}_{n}^w)}} 
= \frac{\mathcal{L}^{w}_{n}- \mathcal{L}_{n-k+1}^\theta}{\sqrt{\Var(\mathcal{L}_{n}^w)}} 
+   \frac{\mathcal{L}^{\theta}_{n-k+1}- \E[\mathcal{L}_{n-k+1}^\theta]}{\sqrt{\Var(\mathcal{L}_{n}^w)}} 
+ \frac{\E[\mathcal{L}^{\theta}_{n-k+1}]- \E[\mathcal{L}_{n}^w]}{\sqrt{\Var(\mathcal{L}_{n}^w)}}.
\end{equation*}
Now  by Theorem \ref{thm:HoppeLeaves} and our previous result on $\Var(\mathcal{L}_n^w)$ we have
\begin{equation*}
\frac{\mathcal{L}^{\theta}_{n-k+1}- \E[\mathcal{L}_{n-k+1}^\theta]}{\sqrt{\Var(\mathcal{L}_{n-k+1}^\theta)}}\frac{\Var(\mathcal{L}_{n-k+1}^\theta)}{\Var(\mathcal{L}_{n}^w)} \xrightarrow[d] {n \to \infty} \mathcal{G},\end{equation*}
and since $\Var(\mathcal{L}_{n}^w) \longrightarrow \infty$ as $n \to \infty$ we have by \eqref{eq:CouplingLeaves} that  
\begin{equation*}\left | \frac{\mathcal{L}^{w}_{n}- \mathcal{L}_{n-k+1}^\theta}{\sqrt{\Var(\mathcal{L}_{n}^w)}} + \frac{\E[\mathcal{L}^{\theta}_{n-k+1}]- \E[\mathcal{L}_{n}^w]}{\sqrt{\Var(\mathcal{L}_{n}^w)}} \right | \leq \frac{2k}{\sqrt{\Var(\mathcal{L}_{n}^w)}} \xrightarrow{a.s.} 0.
\end{equation*}
Now we can apply Slutsky's theorem and conclude that
\begin{equation*} \frac{\mathcal{L}^{w}_{n}- \E[\mathcal{L}_{n}^w]}{\sqrt{\Var(\mathcal{L}_{n}^w)}} \xrightarrow[d] {n\to \infty} \mathcal{G}.
\end{equation*}
\hfill $\square$
\end{proof}
 
\begin{remark} \label{rem:leavesProb} It might be possible to get results on the number of leaves of a general WRT $\mathcal{T}_n^w$ by writing $\mathcal{L}_{n}^{w}$ as the sum of $\1(\ell_i^{w})$ where $\ell_i^{w}$ denotes the event that $i$ is a leaf in $\mathcal{T}_n^w$. It follows from the construction principle that
\begin{equation*}
\Pro \left (\ell_i^{w}\right ) = \prod_{j=i+1}^{n} \left ( 1- \frac{w_i}{w_1 + \dots + w_{j-1}} \right ).
\end{equation*}
After some manipulation this expression becomes
\begin{equation*}
\Pro \left (l_i^{w}\right )  = \frac{ w_1 + \dots + w_{i-1}}{w_1 + \dots + w_{n-1}} \prod_{j=i+1}^{n-1} \left ( 1 + \frac{w_{j} - w_{i}}{w_1 + \dots + w_{j-1}} \right ).
\end{equation*}
An exact expression for the expectation of the number of leaves of a $\theta^k$-RT can be obtained by writing $\E\left [\mathcal{L}_n^{\theta^k}\right ]  = \sum_{i=2}^{n} \E\left [\1\left (\ell_i^{\theta^k}\right )\right ]$ and using the  expression above. After some computations we get by this method
\begin{equation*}\E\left [\mathcal{L}_n^{\theta^k} \right ] = \frac{n}{2} + \frac{k(\theta-1)}{2} + \frac{k \theta(1-k\theta)}{2(k (\theta-1) + n-1)}+ \frac{k-1}{2} \prod_{i=1}^{n-1-k} \frac{\theta (k-1)+i}{\theta k +i}.
\end{equation*}
\end{remark}

\begin{remark} 
For $\theta^k$-RTs it is also possible to obtain results about the expectation, variance and concentration rate of the number of leaves by using a martingale argument similar to the one in \cite{Hoppe}, for details see \cite {ellatez}.
\end{remark}

\subsection{Height}

As a second example, we discuss the height of a WRT,  which   is defined as  the length of the longest path from the root to a leaf.  Let $\mathcal{H}_n^w$ denote the height of a WRT with weight sequence $(w_i)_{i \in \mathbb{N}}$ such that $w_i =1$ for $i>k$.  Let moreover  $\mathcal{D}_{1,i}^w$ and $\mathcal{D}_{1,i}^\theta$ denote the distance between the root and node $i$ in the reconstructed weighted tree and the original Hoppe tree respectively. For $i \leq k$, $\mathcal{D}_{1,i}^w$ is at most $k-1$. Also for any $i>k$, the path from the root to $i^*$ corresponds to the path from the root to the corresponding node in the original Hoppe tree, i.e. the distance from the root to $i-k+1$, except that we might have an additional path among the first $k$ nodes instead of the first edge. Thus $\mathcal{D}^w_{1,i}$ is at least as big as $\mathcal{D}_{i-k+1}^\theta$.

Also $\mathcal{D}^w_{1,i}$ is at most  $k-1$ bigger than the distance between the root and the corresponding node in the original tree: Let $j-k+1$ be the first node on the path from 1 to $i-k+1$ in the original tree. Then in the reconstructed tree $j^*$ will be attached to some $h^*$, where $1\leq h \leq k$. Thus when we consider the path consisting of the path from the root to $h^*$ in the reconstructed tree, the edge from $h^*$ to $j^*$ and the path from $j-k+1$ to $i-k+1$ in the Hoppe tree, we get a path from $1^*$ to $i^*$ in the reconstructed tree. Thus  for all $k+1 \leq i \leq n$, there is some $h \leq k$ such that, 
\begin{equation*}
{D}^w_{1,i} = \mathcal{D}_{1,h}^w + 1 + \mathcal{D}_{j-k+1,i-k+1}^\theta = \mathcal{D}_{1,h}^w + \mathcal{D}_{1,i-k+1}^\theta.
\end{equation*}
Also $\mathcal{D}_{1,h}^w \leq k-1$, so we have
\begin{equation*}
\mathcal{D}_{1,i-k+1}^\theta \leq {D}^w_{1,i} \leq \mathcal{D}_{1,i-k+1}^\theta +k-1,
\end{equation*}
which implies that
\begin{equation*}
\begin{split}
\max_{i =1, \dots, n-k+1} \{\mathcal{D}_{1,i}^\theta\}  \leq \max_{i =1, \dots, n} \{\mathcal{D}_{1,i}^w\} \leq \max_{i =1, \dots, n-k+1} \{\mathcal{D}_{1,i}^\theta\} + k-1.
\end{split}
\end{equation*}
Thus from the definition of the height as $\mathcal{H}_n = \max_{i =1, \dots, n} \{\mathcal{D}_{1,i}\},$ we can derive that
\begin{equation} \label{eq:height}
\begin{split}
\mathcal{H}_{n-k+1}^\theta \leq \mathcal{H}_n^{w} \leq \mathcal{H}_{n-k+1}^\theta + k-1.
\end{split}
\end{equation}

We have the following result about the height of Hoppe trees.
\begin{theorem}[\cite{Hoppe}] \label{thm:HoppeHeight}
Let $\mathcal{H}_n^{\theta}$ denote the height of a Hoppe tree with $n$ nodes. Then
\begin{equation*}
\begin{split}
&\E [\mathcal{H}_n^{\theta}] = e\ln(n) - \frac{3}{2}\ln \ln n + \mathcal{O}(1) \text{ and} \\
&\Var(\mathcal{H}_n^{\theta}) = \mathcal{O}(1).
\end{split}
\end{equation*}
\end{theorem}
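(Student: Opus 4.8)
The theorem concerns Hoppe trees, not WRTs, so rather than the couplings of Section~\ref{sec:coupling} I would reduce the statement to the uniform recursive tree, for which the expansion $\E[\mathcal H_N]=e\ln N-\tfrac32\ln\ln N+O(1)$ with bounded fluctuations ($\Var(\mathcal H_N)=O(1)$, indeed exponential concentration about the mean) is classical. When $\theta=m\in\N^+$, the coupling of Subsection~\ref{rem:URT2mkRT} with $k=1$ builds the Hoppe tree on $n$ nodes by identifying the first $m$ vertices of a URT $\mathcal T_{m+n-1}$. Identifying vertices never increases a root-to-vertex distance, and the subtree of $\mathcal T_{m+n-1}$ induced on $\{1,\dots,m\}$ is itself a URT on $m$ vertices, hence has height at most $m-1$; therefore $\mathcal H_n^{m}$ lies in the deterministic window $[\mathcal H_{m+n-1}-(m-1),\,\mathcal H_{m+n-1}]$. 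Since $\ln(m+n-1)=\ln n+O(1/n)$ and a fixed-width window changes neither the two-term expansion nor the order of the variance, the theorem follows for $\theta=m$ from the URT facts.

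To pass from integer to arbitrary $\theta$ I would use a monotone coupling showing the height is stochastically non-increasing in $\theta$: attach node $i$ to the root in the $\theta$-tree iff $U_i<\theta/(\theta+i-2)$ for a single uniform $U_i$, and otherwise attach it to the same (uniformly chosen) non-root parent in every tree. Since $\theta\mapsto\theta/(\theta+i-2)$ is increasing, an induction on $i$ gives $\mathrm{depth}_{\theta'}(i)\le\mathrm{depth}_{\theta}(i)$ whenever $\theta\le\theta'$, hence $\mathcal H_n^{\theta'}\le\mathcal H_n^{\theta}$. For $\theta\in[m,m+1]$ this realizes $\mathcal H_n^{m+1}\le\mathcal H_n^{\theta}\le\mathcal H_n^{m}$ on one probability space; writing $\mathcal H_n^{\theta}=\mathcal H_n^{m+1}+D_n$ with $0\le D_n\le\mathcal H_n^{m}-\mathcal H_n^{m+1}$ and noting that the nonnegative variable $\mathcal H_n^{m}-\mathcal H_n^{m+1}$ has mean $O(1)$ and variance $O(1)$ (both endpoints have mean $e\ln n-\tfrac32\ln\ln n+O(1)$ and variance $O(1)$) gives $\E[D_n^2]=O(1)$, whence $\E[\mathcal H_n^{\theta}]=e\ln n-\tfrac32\ln\ln n+O(1)$ and $\Var(\mathcal H_n^{\theta})=O(1)$. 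For $\theta\in(0,1)$ the same coupling gives $\mathcal H_n^{1}\le\mathcal H_n^{\theta}$, and the upper bound then comes from the embedding below.

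A cleaner, $\theta$-uniform alternative is a continuous-time embedding: run a branching process in which the root produces children at rate $\theta$, every other vertex at rate $1$, and each newborn sits at depth one more than its parent; stopping when the population first hits $n$ recovers the Hoppe tree on $n$ nodes. If $N(t)$ is the population then $e^{-t}N(t)\to W>0$ a.s.\ with $W$ having exponential tails, so this stopping time equals $\ln n+O_{\Pro}(1)$ with exponential concentration, and the height at time $t$ is the maximal displacement $M(t)$ of a continuous-time branching random walk with unit upward steps. A many-to-one identity gives $\E[Z_d(t)]=\theta\,t^d/d!$ for the expected number of depth-$d$ particles — the factor $\theta$ being the only imprint of the fast root — and from here the standard program applies: a first-moment union bound shows $M(t)\le et-\tfrac12\ln t+O(1)$, a barrier (ballot-type) refinement of that first moment sharpens this to $et-\tfrac32\ln t+O(1)$, and a truncated second-moment estimate supplies the matching lower bound together with exponential concentration of $M(t)$ about $et-\tfrac32\ln t$. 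Transferring through the stopping time $\ln n+O_{\Pro}(1)$, using monotonicity of $M$ and taking expectations, then yields both assertions.

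The one genuinely hard step is this sharp branching-random-walk estimate, i.e.\ upgrading the easy first-moment exponent $-\tfrac12$ to the correct $-\tfrac32$: the upper bound needs the barrier-refined first moment and the lower bound a delicate truncated second moment with the same barrier, and these also drive the exponential concentration that gives $\Var=O(1)$. Everything specific to Hoppe trees — the distinguished rate-$\theta$ root — is by contrast a harmless perturbation: it only multiplies first moments by the constant $\theta$ and affects the process on an initial time interval of length $O(1)$, which is exactly why Hoppe trees inherit the URT height expansion with the same leading and second-order terms.
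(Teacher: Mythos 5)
The paper itself offers no proof of this statement: Theorem \ref{thm:HoppeHeight} is quoted verbatim from \cite{Hoppe} and only used as an input to the coupling arguments, so your proposal is an attempt at an independent derivation rather than a reconstruction of anything in the text. The part you actually carry out is sound: for integer $\theta=m$, the $k=1$ case of the URT-collapsing coupling of Section \ref{sec:coupling} does confine $\mathcal{H}_n^{m}$ to the deterministic window $[\mathcal{H}_{m+n-1}-(m-1),\,\mathcal{H}_{m+n-1}]$, and your monotone coupling in $\theta$ (one uniform $U_i$ for the root decision, one shared uniform non-root parent) is valid because conditionally on not joining the root the parent is uniform over $\{2,\dots,i-1\}$ for every $\theta$; hence for $\theta\ge 1$ the sandwich $\mathcal{H}_n^{m+1}\le\mathcal{H}_n^{\theta}\le\mathcal{H}_n^{m}$ transfers the expansion and the $O(1)$ variance from the URT case. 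Be aware, though, that the URT input you call ``classical'' (the $-\tfrac{3}{2}\ln\ln n$ term with $O(1)$ variance, due to Addario-Berry and Ford) is itself of essentially the same depth as the statement being proved; using it as a black box is legitimate for a reduction, but it should be cited as such.

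The genuine gap is the range $\theta\in(0,1)$, precisely where the Hoppe tree is not squeezed between two integer-parameter trees. There your monotonicity gives only the lower bound $\mathcal{H}_n^{1}\le\mathcal{H}_n^{\theta}$, and the needed upper bound is delegated to the continuous-time embedding, in which every decisive estimate --- the barrier-refined first moment giving $et-\tfrac{3}{2}\ln t+O(1)$, the truncated second moment for the matching lower bound, the exponential concentration of $M(t)$, and the transfer of expectations through the stopping time $\tau_n=\ln n+O_{\Pro}(1)$ (note that for $\theta<1$ the martingale limit $W$ has only a polynomial left tail, $\Pro(W<\varepsilon)\asymp\varepsilon^{\theta}$, so the late-hitting event needs explicit control before you may take expectations) --- is merely announced as ``the standard program applies.'' Those estimates are the entire content of the theorem; asserting them amounts to restating the result for the associated branching random walk, not proving it, as your own final paragraph concedes. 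To close the argument you must either carry out that program, or invoke a specific published height/BRW theorem whose hypotheses demonstrably cover the inhomogeneous rate-$\theta$ root for all $\theta>0$, or replace it by a structural argument, e.g.\ conditioning on the branch partition (given which each branch of a Hoppe tree is a URT on its label set) and bounding the maximum of the branch heights using the exponential tail bounds available for URT heights.
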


Together with the coupling this allows us to derive the following theorem.

\begin{theorem}
Let $\mathcal{H}^{w}_{n}$ denote the height of a WRT of size $n$ with weight sequence $(w_i)_{i \in \mathbb{R}}$ such that there is a $k \in \mathbb{N}$ such that for all $i>k$ we have $w_i = 1$. Then
\begin{equation*}
\begin{split}
&\E\left [\mathcal{H}_n^{w} \right ] = e \ln(n) - \frac{3}{2} \ln \ln (n) + \mathcal{O}(1) \text{ and }\\
&\Var(\mathcal{H}_n^{w}) = \mathcal{O}(1).
\end{split}
\end{equation*}
\end{theorem}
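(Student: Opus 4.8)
The plan is to read both statements off directly from the deterministic sandwich \eqref{eq:height} together with Theorem \ref{thm:HoppeHeight}, treating $k$ (and hence $\theta = \sum_{i=1}^{k} w_i$) as a fixed constant, so that any quantity bounded by a function of $k$ alone counts as $\mathcal{O}(1)$.

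For the expectation, I would take expectations throughout \eqref{eq:height} to get
\[
\E[\mathcal{H}_{n-k+1}^\theta] \le \E[\mathcal{H}_n^{w}] \le \E[\mathcal{H}_{n-k+1}^\theta] + k - 1,
\]
then substitute the expansion of Theorem \ref{thm:HoppeHeight} for $\E[\mathcal{H}_{n-k+1}^\theta]$. Since $k$ does not grow with $n$, we have $\ln(n-k+1) = \ln n + o(1)$ and $\ln\ln(n-k+1) = \ln\ln n + o(1)$, so $\E[\mathcal{H}_{n-k+1}^\theta] = e\ln n - \tfrac{3}{2}\ln\ln n + \mathcal{O}(1)$; the extra additive $k-1$ on the upper bound is likewise absorbed into the $\mathcal{O}(1)$ term. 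This gives the claimed expansion for $\E[\mathcal{H}_n^{w}]$.

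For the variance, I would write $\mathcal{H}_n^{w} = \mathcal{H}_{n-k+1}^\theta + Z$ where, by \eqref{eq:height}, $Z$ is a (possibly dependent) random variable with $0 \le Z \le k-1$, so that $\Var(Z) \le (k-1)^2/4 = \mathcal{O}(1)$. Expanding
\[
\Var(\mathcal{H}_n^{w}) = \Var(\mathcal{H}_{n-k+1}^\theta) + \Var(Z) + 2\Cov(\mathcal{H}_{n-k+1}^\theta, Z)
\]
and bounding the last term by Cauchy--Schwarz, $|\Cov(\mathcal{H}_{n-k+1}^\theta, Z)| \le \sqrt{\Var(\mathcal{H}_{n-k+1}^\theta)\,\Var(Z)}$, every term on the right is $\mathcal{O}(1)$, since $\Var(\mathcal{H}_{n-k+1}^\theta) = \mathcal{O}(1)$ by Theorem \ref{thm:HoppeHeight}. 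Hence $\Var(\mathcal{H}_n^{w}) = \mathcal{O}(1)$.

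There is essentially no hard step: the content is entirely in the deterministic bound \eqref{eq:height}, already established by the coupling, and in the known Hoppe-tree asymptotics. The only point needing a moment's care is checking that replacing $n$ by $n-k+1$ inside the logarithms does not disturb the leading-order terms, which is immediate because $k$ is fixed.

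\bigskip

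\begin{proof}

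\end{proof}
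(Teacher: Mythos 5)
Your proposal is correct and follows essentially the same route as the paper: sandwich $\mathcal{H}_n^w$ between $\mathcal{H}_{n-k+1}^\theta$ and $\mathcal{H}_{n-k+1}^\theta + k-1$ via the coupling, expand Theorem \ref{thm:HoppeHeight} noting that replacing $n$ by $n-k+1$ in the logarithms only costs $o(1)$, and for the variance write $\mathcal{H}_n^w = \mathcal{H}_{n-k+1}^\theta + Y$ with $Y$ bounded by $k$ and control the covariance. The paper's proof is the same argument, so nothing is missing.
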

\begin{proof}
According to Theorem \ref{thm:HoppeHeight} we have 
$\E[\mathcal{H}_{n-k+1}^{\theta}] = e\ln(n-k+1) - \frac{3}{2}\ln \ln (n-k+1) + \mathcal{O}(1).$
Since,
\begin{equation*}
\begin{split}
& e\ln(n-k+1) - \frac{3}{2}\ln \ln (n-k+1) \\
& = e\left (  \ln(n) + \ln\left (1-\frac{k-1}{n} \right) \right ) \\
& \hspace{3em} - \frac{3}{2} \left ( \ln \ln(n) + \ln \left ( 1+ \frac{\ln\left (1-\frac{k-1}{n} \right )}{\ln(n)} \right ) \right ) \\
&= e \ln(n) - \frac{3}{2} \ln \ln (n) + o(1).
\end{split}
\end{equation*}
Thus we get by \eqref{eq:height} for $k>n$,
\begin{equation*}
e \ln(n) - \frac{3}{2} \ln \ln (n) + \mathcal{O}(1) \leq \E\left [\mathcal{H}_n^{w} \right ] \leq e \ln(n) - \frac{3}{2} \ln \ln (n) + \mathcal{O}(1)+ k-1
\end{equation*}
which  implies 
\begin{equation*}
\E\left [\mathcal{H}_n^{w} \right ] = e \ln(n) - \frac{3}{2} \ln \ln (n) + \mathcal{O}(1).
\end{equation*}
Similarly to before we might define $Y:= \mathcal{H}_n^{w} - \mathcal{H}_{n-k+1}^\theta$. Then by \eqref{eq:height} it holds that $Y<k$, so
\[\Var(\mathcal{H}_n^{w}) = \Var(\mathcal{H}_{n-k+1}^\theta+Y) = \Var(\mathcal{H}_{n-k+1}^\theta) + \Var(Y) + \Cov(\mathcal{H}_{n-k+1}^\theta, Y) = \mathcal{O}(1).\]

\hfill $\square$
\end{proof}

\section{Largest branch}\label{sec:largestbranch}

\subsection{Permutations view}

In this section, we focus on standard Hoppe trees and study the size of the largest branch in this   tree model. The results will sharpen the corresponding observations of \cite{Feng05} for URTs. Before moving on to largest branches, we need to discuss constructions of Hoppe trees via random permutations, in particular permutations that are generated via Ewens sampling formula.

For each $n$, Ewens distribution gives a family of distributions over the vectors $$C^{(n)} = (C_1^{(n)}, C_2^{(n)},\ldots,C_n^{(n)})$$ with $\sum_{i=1}^n i C_i^{(n)} = n.$  
In particular, for given $\theta > 0$, the Ewens distribution $EW(\theta)$ is defined with the probabilities $$\mathbb{P}_{\theta} (C^{(n)} = (c_1,\ldots,c_n)) = \mathbf{1}\left(\sum_{j=1}^n j c_j = n \right) \frac{n!}{\theta_{(n)}} \prod_{j=1}^n  \left( \frac{\theta}{j} \right)^{c_j} \frac{1}{c_j!}, \qquad  c_1,\ldots,c_n \in \mathbb{N},$$ where $\theta_{(n)} =  \theta(\theta + 1) \ldots (\theta + n - 1)$. 

Below we call a permutation resulting from $EW(\theta)$ a Hoppe permutation. In this setting, $c_i$ stands for the number of cycles in permutation of size $i$. In order to relate the topic to Hoppe trees, we need to discuss a recursive construction of Hoppe permutations. The discussion here  is similar to the one in   \cite{Altokislak}. For convenience,  the  permutation will be  constructed on the label set $\{2,3,\ldots,n\}$. We first begin with the  permutation $(2)$ with only one cycle. Then 3 either joins the first cycle to the right of $2$ with probability $\frac{1}{\theta + 1}$, or starts the second cycle with probability $\frac{\theta}{\theta + 1}$. Once we have constructed a permutation on $\{2,3,\ldots,k-1\}$, $k$ either starts a new cycle with probability $\frac{\theta}{\theta +k - 1}$, or is inserted to the right of a randomly chosen integer already assigned to a cycle. The resulting permutation then has the distribution $EW(\theta)$ \cite{ABT}. 

Next, we construct a Hoppe tree based on the permutation construction of the previous paragraph. Begin with node  1 as the root and node 2 attached to it. Then if node 3 begins a new cycle in the corresponding Hoppe permutation, attach it to node 1, and otherwise attach it to node 2. Then for node  $k \geq 4$, if $k$ starts a new cycle in the Hoppe permutation, then attach node $k$ to node $1$, and otherwise attach it to node $j$ where 
$k$ was inserted to the right of $j$ in the corresponding permutation.

It is then clear that this gives a bijection between Hoppe permutations and Hoppe trees. In particular, the cycles in a Hoppe permutation are in a one-to-one relation with the number of branches in the corresponding tree. This reduces the study of the size of the largest branch  in a Hoppe tree to the study of the largest cycle in its permutation correspondence.

\subsection{Size of the largest branch}

For a given tree $\mathcal{T}$ on $n$ vertices, the number of branches is the number of  children of the root, i.e. all nodes that are attached to the root. If a node $i$ is attached to the root, then node $i$ with its descendants is said to form a branch of the tree.  Let now  $\mathcal{B}_{n,i}(\mathcal{T})$  be the number of branches of size $i$ in $\mathcal{T}$. Also define   $$\nu_n(\mathcal{T}) := \max\{i \in [n-1]: \mathcal{B}_{n, i} \geq 1\}$$ to be the number of nodes  in the largest branch of a given tree, $\mathcal{T}$. In \cite{Feng05}, it was shown that  \begin{equation}\label{eqn:largestbranch}
\lim_{n \rightarrow \infty} \mathbb{P} \left(\nu_n(\mathcal{T}_n) \geq \frac{n}{2} \right) = \ln 2,
\end{equation} when $\mathcal{T}_n$ is a URT on $n$ vertices.    The first purpose of this section and the next theorem  is to  extend the  result of \cite{Feng05} to Hoppe trees, and to  provide more details about the asymptotic distribution, via exploiting the relation between Hoppe trees and Hoppe permutations.  Further, the result in \eqref{eqn:largestbranch} is now extended to an explicit expression for  $\lim_{n \rightarrow \infty} \mathbb{P} \left(\nu_n(\mathcal{T}_n) \geq cn \right)$ for $c \in [1/2,1]$. Once we have the results for the Hoppe tree case, the coupling construction of the previous section will also generalize these results to WRTs. 

\begin{theorem} 
(i) Let $\mathcal{T}_n^{\theta}$ be a Hoppe tree. Then $\frac{\nu_n (\mathcal{T}_n^{\theta})}{n}$ converges weakly to a random variable $\nu$ whose cumulative distribution function is given by
$$
F_{\theta}(x)  = e^{\gamma \theta} x^{\theta - 1} \Gamma(\theta) p_{\theta}(1 / x), \quad x > 0,$$
where   $\gamma$ is Euler's constant, 
$$p_{\theta}(x) = \frac{e^{- \gamma \theta} x^{\theta - 1}}{\Gamma(\theta)} \left(1 + \sum_{k=1}^{\infty} \frac{(- \theta)^k}{k!} \int \cdots \int_{\mathcal{S}_k(x)} \left(1 - \sum_{j=1}^k y_j \right)^{\theta - 1} \right) \frac{d y_1 \cdots d y_k}{y_1 \cdots y_k}, 
$$
with 
 $$\mathcal{S}_k(x)= \left\{y_1 > \frac{1}{x},\ldots, y_k > \frac{1}{x}, \sum_{j=1}^k y_j < 1\right\}.$$

(ii) When $\theta = 1$, we obtain the following for the largest branch in a URT:
 $\frac{\nu_n(\mathcal{T}_n)}{n}$ converges weakly to a random variable $\nu$ whose cumulative distribution function is given by
$$
F_1(x)  =
\begin{cases}
0, & \text{if } x<0 \\
1 + \sum_{k=1}^{\infty} \frac{(-1)^k}{k!} \int \cdots \int_{\mathcal{S}_k(x)} \frac{dy_1\ldots dy_k}{y_1\ldots y_k}, & \text{if } x \in [0,1] \\ 
1, & \text{if } x >1,
\end{cases}
$$
where 
  $\mathcal{S}_k(x)$ is as before. 

Also, for any $c \ in \left[\frac{1}{2},  1\right]$, we have $$\lim_{n \rightarrow \infty} \mathbb{P}(\nu_n(\mathcal{T}_n) \leq c n) = 1 - \ln (c^{-1}).$$ 
In particular,  $$\mathbb{E}[\nu] \geq \frac{n}{2}.$$
\end{theorem}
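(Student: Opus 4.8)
The plan is to reduce everything to the classical theory of the largest cycle in an Ewens-distributed permutation, exactly as the paragraph on Hoppe permutations suggests. The bijection between Hoppe trees and Hoppe permutations sends branches of $\mathcal{T}_n^\theta$ to cycles of the permutation, so $\nu_n(\mathcal{T}_n^\theta)$ has the same law as the length $L_n^{(1)}$ of the longest cycle in a random permutation of $\{2,\dots,n\}$ drawn from $EW(\theta)$ (equivalently, on $n-1$ symbols). First I would invoke the Poisson--Dirichlet limit: it is a classical fact (Kingman; see also Arratia--Barbour--Tavar\'e) that the vector of normalized cycle lengths of an $EW(\theta)$ permutation converges weakly to $\mathrm{PD}(\theta)$, and in particular $L_n^{(1)}/n \to \nu$, where $\nu$ is the largest coordinate of a $\mathrm{PD}(\theta)$ random sequence. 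The density $p_\theta$ written in the statement is precisely the density of $1/\nu$ under $\mathrm{PD}(\theta)$ as given in that literature (the inclusion--exclusion / Buchstab-type formula), so for part (i) the work is just to cite this identification and to record that $F_\theta(x) = \Pro(\nu \le x) = \Pro(1/\nu \ge 1/x)$, which rearranges into the displayed $e^{\gamma\theta} x^{\theta-1}\Gamma(\theta)\, p_\theta(1/x)$ using the stated formula for $p_\theta$.

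For part (ii) I would specialize $\theta = 1$, where $\mathrm{PD}(1)$ is the limit of cycle lengths of a uniform random permutation and $p_1$ collapses to the Buchstab/Goncharov formula; setting $\theta=1$ in $p_\theta$ kills the prefactor $e^{-\gamma\theta}x^{\theta-1}/\Gamma(\theta)$ (it becomes $1$) and leaves exactly the series for $F_1$ displayed, valid for $x \in [0,1]$, with the obvious boundary values $0$ and $1$ outside. For the explicit tail $\lim_n \Pro(\nu_n(\mathcal{T}_n) \le cn) = 1 - \ln(c^{-1})$ on $c \in [1/2,1]$: on this range at most one cycle can have length exceeding $cn$, so by inclusion--exclusion only the $k=1$ term of the series survives, and $\int_{\mathcal S_1(c)} dy_1/y_1 = \int_{1/c}^{\dots} $ — more precisely the probability that the \emph{normalized} largest cycle exceeds $c$ equals $\int_c^1 \frac{dt}{t} = \ln(1/c)$ (the density of the size-biased-free largest fragment on $(1/2,1)$ is $1/t$, a standard computation from the Ewens sampling formula / Dickman function at $\theta=1$). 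Hence $\Pro(\nu \le c) = 1 - \ln(c^{-1})$, which at $c = 1/2$ recovers $\ln 2$ and reproduces \eqref{eqn:largestbranch}.

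Finally, for $\E[\nu] \ge \tfrac12$ I would integrate the tail: $\E[\nu] = \int_0^1 \Pro(\nu > t)\,dt \ge \int_{1/2}^1 \Pro(\nu > t)\,dt = \int_{1/2}^1 \ln(1/t)\,dt$, and evaluate this elementary integral, which equals $1 - \tfrac12\ln 2 - \tfrac12 > \tfrac12$; alternatively one notes $\Pro(\nu>t)\ge \ln(1/t)\ge 2(1-t)$ on $[1/2,1]$ is not needed — the direct evaluation suffices. (The statement "$\E[\nu] \ge n/2$" in the excerpt should evidently read $\E[\nu] \ge 1/2$, i.e. the largest branch occupies on average at least half the tree in the limit; I would state it in the normalized form.)

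The main obstacle is not the tree-permutation bijection, which is immediate, but making the identification of $p_\theta$ with the $\mathrm{PD}(\theta)$ largest-coordinate density clean: one must be careful that the permutation lives on $n-1$ rather than $n$ symbols (harmless in the limit), and one must match the normalization constants $e^{\gamma\theta}/\Gamma(\theta)$ with the version of the inclusion--exclusion formula one cites, since different sources absorb the Dickman-type constant differently. Getting the $k=1$ term to yield exactly $\ln(1/c)$ on $[1/2,1]$, with the correct justification that higher-order terms vanish there, is the one genuinely computational point and should be done carefully.
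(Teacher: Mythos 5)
Your reduction to the largest cycle of an $EW(\theta)$ permutation, the appeal to Kingman's limit theorem for part (i), the specialization $\theta=1$ for part (ii), and the identification of the limiting density $1/x$ on $[1/2,1]$ all match the paper's argument; the only stylistic difference is that the paper simply cites Watterson for $f_1(x)=1/x$ on $[1/2,1]$, whereas you derive it from the $k=1$ term of the inclusion--exclusion series (legitimate, since for $c\ge 1/2$ at most one cycle can exceed $cn$). You are also right that the statement ``$\E[\nu]\ge n/2$'' should be read as $\E[\nu]\ge 1/2$ for the limiting variable.

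However, your final step for $\E[\nu]\ge \tfrac12$ contains a genuine error. You bound $\E[\nu]=\int_0^1\Pro(\nu>t)\,dt$ below by $\int_{1/2}^1\ln(1/t)\,dt$ and assert this equals $1-\tfrac12\ln 2-\tfrac12>\tfrac12$; but
\[
\int_{1/2}^1\ln(1/t)\,dt=\Bigl[t-t\ln t\Bigr]_{1/2}^{1}=\tfrac12\bigl(1-\ln 2\bigr)\approx 0.153<\tfrac12,
\]
so truncating the tail integral to $[1/2,1]$ does not suffice. The fix is easy: either argue as the paper does, using the density directly,
\[
\E[\nu]\;\ge\;\int_{1/2}^1 x\,f_1(x)\,dx=\int_{1/2}^1 x\cdot\frac1x\,dx=\tfrac12,
\]
or keep your tail representation but also retain the contribution on $[0,1/2]$, where monotonicity gives $\Pro(\nu>t)\ge\Pro(\nu>1/2)=\ln 2$, so that
\[
\E[\nu]\;\ge\;\tfrac12\ln 2+\tfrac12\bigl(1-\ln 2\bigr)=\tfrac12.
\]
With that repair your argument is complete and essentially coincides with the paper's proof.
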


\begin{proof}
(i) First, we translate the problem into random permutation setting. We have 
$$\nu_n(\mathcal{T}_n^{\theta}) =_d \max\{i \in [n-1] : C_{n-1,i}(\theta) \geq 1\},$$ where $C_{n-1,i}(\theta)$ is the number of cycles of length $i$ in a $\theta$-biased Hoppe permutation. In this setting, Kingman \cite{Kingman77} shows that $\frac{\alpha_n(\theta)}{n}$  converges in distribution to a random variable $\alpha$ with cumulative distribution function $$F_{\theta} (x) = e^{\gamma \theta} x^{\theta - 1} \Gamma (\theta) p_{\theta} \left(\frac{1}{x} \right), x >0,$$ where $\gamma$ is Euler's constant, $$p_{\theta}(x) = \frac{e^{- \gamma \theta}}{\Gamma (\theta)} \left(1 + \sum_{k=1}^{\infty} \frac{(-\theta)^k}{k!} \right) \int \cdots \int_{\mathcal{S}_k(x)} \left(1 - \sum_{j=1}^k y_i \right)^{\theta - 1}\frac{dy_1\ldots dy_k}{y_1\ldots y_k}, $$ and  $$\mathcal{S}_k(x)= \left\{y_1 > \frac{1}{x},\ldots, y_k > \frac{1}{x}, \sum_{j=1}^k y_j < 1\right\}.$$
This proves the first part. 

(ii) Setting $\theta =1$ in argument of (i), and recalling that the random permutation in this case reduces to a uniformly random permutation   immediately  reveals the result. 

For the second claim, we first note Watterson \cite{Watterson76} shows that the derivative of $F_1(x)$ over $[1/2, 1]$ simplifies to $$f_1(x) = \frac{1}{x}.$$ Hence, for any $c \in \left[\frac{1}{2},  1\right]$, $$ \lim_{n \rightarrow \infty} \mathbb{P}(\nu_n(\mathcal{T}_n) \geq c n) = \mathbb{P}(\nu \geq c n) = \int_{c}^1 \frac{1}{x} dx = \ln (1/c).$$
Finally, we have  $$\mathbb{E}[\nu_n(\mathcal{T}_n)] \geq \int_{1/2}^1 x \frac{1}{x} dx = \frac{1}{2}.$$

\hfill $\square$
\end{proof}

\begin{remark}
The value $\lim_{n \rightarrow \infty} \frac{\nu_n(\mathcal{T}_n)}{n}$ when $\mathcal{T}_n$ is a URT is known to be the 
  Golomb-Dickman constant in the literature. Its exact value is known to be $0.62432998854...$.
\end{remark}

Now, let $\mathcal{T}_n^{w}$ be a  WRT of size $n$ with weight sequence $(w_i)_{i \in \mathbb{R}}$ such that there is a $k \in \mathbb{N}$ such that for all $i>k$ we have $w_i = 1$. Since we can  find a coupling of a Hoppe tree to a WRT in which the number of nodes in the largest branch  differs at most by $k$, the following now follows immediately. 
   
\begin{theorem}
Let $\mathcal{T}^{w}_{n}$ be a WRT of size $n$ with weight sequence $(w_i)_{i \in \mathbb{R}}$ such that there is a $k \in \mathbb{N}$ such that for all $i>k$ we have $w_i = 1$.  Then for any $c \ in \left[\frac{1}{2},  1\right]$, we have $$\lim_{n \rightarrow \infty} \mathbb{P}(\nu_n(\mathcal{T}_n^w) \leq c n) = 1 - \ln (c^{-1}).$$ 
\end{theorem}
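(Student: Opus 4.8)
The plan is to derive this theorem from the Hoppe tree result (part (ii) of the previous theorem) exactly as indicated in the sentence preceding the statement: by transporting the asymptotic distribution of $\nu_n$ through the coupling construction of Section~\ref{sec:coupling}, whose effect on the largest branch is a bounded perturbation. First I would recall the coupling: a WRT $\mathcal{T}_n^w$ with $w_i=1$ for $i>k$ is built from a Hoppe tree $\mathcal{T}_{n-k+1}^\theta$ on $n-k+1$ nodes with $\theta=\sum_{i=1}^k w_i$, by splitting the root into the $k$ nodes $1^*,\dots,k^*$ and redistributing the children of the Hoppe root among them according to their weights. Crucially, the branches of $\mathcal{T}_n^w$ are the subtrees hanging off $1^*$; each branch of $\mathcal{T}_n^w$ (other than those entirely inside $\{1^*,\dots,k^*\}$) is obtained from a branch of $\mathcal{T}_{n-k+1}^\theta$ either unchanged, or merged with at most $k-1$ extra nodes coming from the splitting vertices that happen to lie on its path to $1^*$. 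Hence the sizes of the largest branches satisfy
\begin{equation*}
\nu_{n-k+1}(\mathcal{T}_{n-k+1}^\theta) \;\leq\; \nu_n(\mathcal{T}_n^w) \;\leq\; \nu_{n-k+1}(\mathcal{T}_{n-k+1}^\theta) + k,
\end{equation*}
which is the "$\nu$ differs by at most $k$" claim. (A small technical point I would spell out: a branch of $\mathcal{T}_n^w$ rooted at some $j^*$ with $1\le j\le k$ and $j^*$ having Hoppe-root children could in principle absorb several original branches, but one still checks that the largest such branch is at most $\nu_{n-k+1}^\theta + k$ in size, and at least the largest original branch size that got routed through it; the displayed two-sided bound holds in all cases.)

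Next, I would divide the inequality by $n$ and pass to the limit. Write $\theta=\sum_{i=1}^k w_i$, a fixed constant independent of $n$. By part (ii) of the previous theorem applied with this value of $\theta$ — wait; more precisely one needs part (i), the general $\theta$ statement, since $\theta$ need not equal $1$ — $\nu_{n-k+1}(\mathcal{T}_{n-k+1}^\theta)/(n-k+1)$ converges weakly to the random variable $\nu$ with c.d.f.\ $F_\theta$. Since $(n-k+1)/n\to 1$ and $k/n\to 0$, the squeeze above gives
\begin{equation*}
\frac{\nu_n(\mathcal{T}_n^w)}{n} \;\xrightarrow{d}\; \nu.
\end{equation*}
For the particular claim about $c\in[1/2,1]$, I would then observe that the theorem as stated asserts $\lim_{n\to\infty}\mathbb{P}(\nu_n(\mathcal{T}_n^w)\le cn)=1-\ln(c^{-1})$ with \emph{no} $\theta$-dependence on the right-hand side; this forces the relevant regime to be $\theta=1$, i.e.\ $\sum_{i=1}^k w_i=1$ — so the statement is really about WRTs whose "initial weights sum to one" (in particular $k$ could be any value with $w_1+\dots+w_k=1$, e.g.\ $k=1,w_1=1$, the URT). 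Under that hypothesis, Watterson's computation quoted in the previous proof gives $f_1(x)=1/x$ on $[1/2,1]$, so for $c\in[1/2,1]$,
\begin{equation*}
\lim_{n\to\infty}\mathbb{P}(\nu_n(\mathcal{T}_n^w)\le cn)=\mathbb{P}(\nu\le c)=1-\int_c^1\frac{1}{x}\,dx=1-\ln(c^{-1}),
\end{equation*}
using that $\nu\le 1$ a.s.\ and that the distribution of $\nu$ is continuous at $c$ (so the $\le$ versus $<$ and the $n-k+1$ versus $n$ normalizations are immaterial in the limit).

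The main obstacle — really the only nontrivial point — is justifying the two-sided bound $\nu_{n-k+1}^\theta \le \nu_n^w \le \nu_{n-k+1}^\theta + k$ carefully, i.e.\ checking that splitting the root and rerouting its children can neither shrink the largest branch nor grow it by more than $k$. The lower bound is essentially immediate: any original branch of the Hoppe tree, rooted at a child $i$ of the Hoppe root, maps to a connected set of the same size $|i|$ hanging below whichever of $1^*,\dots,k^*$ absorbed $i$, so it is contained in a single branch of $\mathcal{T}_n^w$; taking $i$ to realize $\nu_{n-k+1}^\theta$ gives a branch of $\mathcal{T}_n^w$ of size at least $\nu_{n-k+1}^\theta$. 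The upper bound requires noting that a branch of $\mathcal{T}_n^w$ rooted at $j^*$ consists of: (a) some of the splitting nodes among $\{1^*,\dots,k^*\}$ lying below $j^*$ in the size-$k$ WRT — at most $k-1$ of them — together with (b) a union of images of original Hoppe branches, but all of those images are in fact contained in a \emph{single} original branch, because in the Hoppe tree they are all children-subtrees of the root attached along... — here one must be slightly careful, since distinct children of the Hoppe root give disjoint original branches that could both be routed to the same $j^*$, making branch $j^*$ larger than any single original branch. I would resolve this by observing that the largest branch of $\mathcal{T}_n^w$ is attained at the \emph{child of $1^*$}, and bounding its size directly: it equals (number of splitting nodes) $+$ (total size of all original branches routed into the relevant sub-subtree), and since the sub-subtrees rooted at the various $j^*$ partition the non-root nodes, at least one of them — hence the max — could be large. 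In fact the honest statement one proves is only the \emph{lower} direction $\nu_n^w \ge \nu_{n-k+1}^\theta$ cleanly, while for the upper direction one argues instead that $\nu_n^w \le \nu_{n-k+1}^\theta + (k-1)$ fails in general and the correct replacement is a high-probability statement or a direct re-derivation of the limit law for $\nu_n^w$ from Kingman's/Watterson's results applied to the Ewens-type construction of $\mathcal{T}_n^w$. I would therefore present the proof in the robust form: establish $\nu_n^w \ge \nu_{n-k+1}^\theta$ (giving a lower bound on the limit), establish the matching upper bound by the permutation correspondence of Section~\ref{sec:largestbranch} adapted to $\mathcal{T}_n^w$ (the largest branch is still the largest "cycle-type block", and the first $k$ nodes contribute $O(1)$), and conclude $\nu_n^w/n \to \nu$ with $f_1(x)=1/x$ on $[1/2,1]$, yielding $\lim \mathbb{P}(\nu_n(\mathcal{T}_n^w)\le cn)=1-\ln(c^{-1})$.
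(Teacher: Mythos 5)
Your doubts about the key step are well founded, and it is worth knowing that the paper's entire ``proof'' of this theorem is exactly the sentence you started from: it asserts that under the coupling the size of the largest branch changes by at most $k$ and declares the theorem immediate. Your first displayed two-sided bound is therefore the paper's whole argument, and your objection to its upper half is correct: when a child $i$ of the Hoppe root is rerouted to some $j^*$ with $2\le j\le k$, its entire subtree is absorbed into the WRT branch rooted at the child of $1^*$ lying above $j^*$, and \emph{several disjoint} Hoppe branches can be absorbed into the same WRT branch. Since the two largest Hoppe branches each occupy a positive fraction of $n$ with probability bounded away from zero (their scaled sizes converge to the two largest parts of a Poisson--Dirichlet vector), the event that both are routed to the same $j^*\neq 1^*$ has probability at least $(w_j/\theta)^2$, and on that event $\nu_n(\mathcal{T}_n^w)\le \nu_{n-k+1}(\mathcal{T}_{n-k+1}^\theta)+k$ fails. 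So the step you flagged is a genuine gap, both in your write-up and in the paper itself.

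Your attempted repairs do not close it. Forcing $\sum_{i=1}^k w_i=1$ is not part of the statement, and even then the merging changes the limit law: the rerouted branches aggregate into lumps whose limiting masses are Beta/Dirichlet distributed, so $\nu_n(\mathcal{T}_n^w)/n$ is the maximum of the largest ``unmerged'' component and these lumps, which is not the $F_1$ law in general; for the same reason the fallback claim that ``the largest branch is still the largest cycle-type block up to $O(1)$'' for $\mathcal{T}_n^w$ is precisely what would need proof and is false as stated. In fact your observation about the missing $\theta$-dependence is decisive against the theorem itself: taking $k=1$ and $w_1=\theta\neq 1$, the WRT \emph{is} a Hoppe tree, and part (i) of the preceding theorem gives the limit $F_\theta(c)$, whose density on $(1/2,1)$ is $\theta(1-x)^{\theta-1}/x\neq 1/x$, so the weight-free limit $1-\ln(c^{-1})$ cannot hold for all admissible weight sequences. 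The honest conclusion is that no bookkeeping along the paper's coupling route (nor your sketched alternatives) proves the statement as written; the correct version must either restrict the weights (essentially to the URT case) or replace $1-\ln(c^{-1})$ by the distribution function of the appropriate merged Poisson--Dirichlet functional.
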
   
   
\section{Depth of node $n$}\label{sec:depth}

The depth of node $n$ is the length of the path from the root to $n$ or equivalently the number of ancestors of $n$. Note that in this and the next section we don't have any restrictions on the weight sequence $(w_i)_{i\in \mathbb{N}}$.

\begin{theorem}

Let $\mathcal{D}_n^{w}$ denote the depth of node $n$ in a WRT $\mathcal{T}_n^{w}$ and let $Z_n^w$ denote the set of ancestors of $n$. Let moreover $A_{i,n}^w := \1(i \in Z_n^w)$.	
Then 
\[\mathcal{D}_n^w = 1 + \sum_{i=2}^{n-1} A_{i,n}^w.
\]
The $A_{i,n}^w$ are mutually independent Bernoulli random variables with 
\[ \Pro( A_{i,n}^w ) = \frac{w_i}{\sum_{j=1}^{i}w_j}.\]

This directly yields the expectation and the variance:
\begin{equation*}
\E[\mathcal{D}_n^w] = \sum_{i=1}^{n-1} \frac{w_i}{\sum_{j=1}^{i}w_j}
\end{equation*}
and
\begin{equation*} \Var(\mathcal{D}_n^w) = \sum_{i=2}^{n-1} \frac{w_i}{\sum_{j=1}^{i}w_j} \left (1 - \frac{w_i}{\sum_{j=1}^{i}w_j} \right ).
\end{equation*}
\end{theorem}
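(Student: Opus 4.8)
The plan is to reduce the statement to the single fact that, in the sequential construction of a WRT, the parent of node $j$ is chosen \emph{independently} of all the other parent choices. Write $W_i:=\sum_{j=1}^i w_j$, and for $2\le j\le n$ let $U_j\in\{1,\dots,j-1\}$ be the (random) parent of node $j$, so that $U_2,\dots,U_n$ are mutually independent with $\Pro(U_j=m)=w_m/W_{j-1}$. For $a<b$ write $a\prec b$ for the event that $a$ is an ancestor of $b$. The deterministic identity $\mathcal{D}_n^w=1+\sum_{i=2}^{n-1}A_{i,n}^w$ is immediate: the depth of $n$ equals the number of its proper ancestors, $Z_n^w\subseteq\{1,\dots,n-1\}$, the root always lies in $Z_n^w$, and $n\notin Z_n^w$; equivalently $A_{1,n}^w\equiv 1$.

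Next I would record three elementary facts. \textbf{(a)} A parent always carries a smaller label than its child, so the labels along the path from any node to the root strictly decrease; in particular the ancestor set of a fixed node is totally ordered by $\prec$. \textbf{(b)} Whether $a\prec b$ holds is decided by iterating $b_0=b$, $b_{\ell+1}=U_{b_\ell}$ and stopping the first time the chain reaches $a$ or drops below $a$; this procedure only ever inspects $U_m$ for indices $m$ with $a<m\le b$, hence $\{a\prec b\}\in\sigma(U_m:a<m\le b)$. \textbf{(c)} Conditioning on $U_b$ gives $\Pro(a\prec b)=\frac{w_a}{W_{b-1}}+\sum_{m=a+1}^{b-1}\frac{w_m}{W_{b-1}}\Pro(a\prec m)$, and an induction on $b$ with base case $b=a+1$ yields $\Pro(a\prec b)=w_a/W_a$ for every $b>a$; in particular $\Pro(A_{i,n}^w=1)=w_i/W_i$.

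For mutual independence, fix $2\le a_1<\dots<a_r\le n-1$ and set $a_{r+1}:=n$. Using (a) and transitivity of $\prec$, I would check the set identity $\{A_{a_1,n}^w=1,\dots,A_{a_r,n}^w=1\}=\bigcap_{s=1}^r\{a_s\prec a_{s+1}\}$: if all the $a_s$ are ancestors of $n$ they occur on the decreasing root-path of $n$ in the order $n,\dots,a_r,\dots,a_1,\dots,1$, so each $a_s$ is an ancestor of $a_{s+1}$, and conversely a chain $a_1\prec a_2\prec\cdots\prec a_r\prec n$ forces every $a_s$ to be an ancestor of $n$. By (b) the $s$-th event lies in $\sigma(U_m:a_s<m\le a_{s+1})$, and the index blocks $\{a_s+1,\dots,a_{s+1}\}$, $s=1,\dots,r$, are pairwise disjoint; since the $U_m$ are independent, so are these $r$ events, and therefore
\[
\Pro\!\left(\bigcap_{s=1}^r\{A_{a_s,n}^w=1\}\right)=\prod_{s=1}^r\Pro(a_s\prec a_{s+1})=\prod_{s=1}^r\frac{w_{a_s}}{W_{a_s}}=\prod_{s=1}^r\Pro(A_{a_s,n}^w=1).
\]
As this holds for every subset of $\{2,\dots,n-1\}$, the Bernoulli variables $(A_{i,n}^w)_{i=2}^{n-1}$ are mutually independent with the claimed parameters, and the moment formulas follow: $\E[\mathcal{D}_n^w]=1+\sum_{i=2}^{n-1}w_i/W_i=\sum_{i=1}^{n-1}w_i/W_i$ since $w_1/W_1=1$, while independence gives $\Var(\mathcal{D}_n^w)=\sum_{i=2}^{n-1}\frac{w_i}{W_i}\big(1-\frac{w_i}{W_i}\big)$.

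The recursion in (c) and the moment computation are routine. The step I expect to be the real obstacle is the pair of structural claims (a)--(b) together with the block-disjointness used in the last display: one must argue cleanly that the test ``$a_s\prec a_{s+1}$'' consults only the parent choices of nodes with labels in $(a_s,a_{s+1}]$, so that the $r$ tests are functions of disjoint, hence independent, blocks of the randomness $(U_2,\dots,U_n)$. Once this is nailed down, everything else is bookkeeping.
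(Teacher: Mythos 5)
Your proof is correct, but it takes a genuinely different route from the paper's. For the marginal law $\Pro(A_{i,n}^w=1)=w_i/\sum_{j\le i}w_j$, the paper fixes $n$ and inducts downward on $i$, decomposing over which node $j$ is the child of $i$ lying on the path to $n$ and then telescoping the resulting sum; you instead condition on the parent $U_b$ of the top node and induct on $b$, a dual decomposition that reaches the same recursion more directly. The bigger difference is in the independence argument. The paper computes, from the attachment probabilities, the exact probability of each full ancestor configuration (the event that precisely $j_1,\dots,j_k$ are the ancestors of $n$) and checks that it factors as $\prod_i\Pro(j_i\in Z_n^w)\prod_{j\ne j_i}\Pro(j\notin Z_n^w)$, i.e.\ it exhibits the joint pmf in product form. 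You instead prove the multiplication rule for arbitrary subcollections $\{A_{a_1,n}^w=1,\dots,A_{a_r,n}^w=1\}$ by rewriting the event as a chain $a_1\prec a_2\prec\cdots\prec n$ and observing that each link $\{a_s\prec a_{s+1}\}$ is measurable with respect to the parent choices $U_m$ with $a_s<m\le a_{s+1}$, so the links are functions of disjoint blocks of independent randomness. Both your structural claims (decreasing labels along root paths, and the block-measurability of ancestry events) are sound, and your final step, passing from the product rule over all subcollections of events to mutual independence of the indicators, is a standard lemma, though it deserves an explicit sentence (one replaces events by complements one at a time via $\Pro(E^c\cap F)=\Pro(F)-\Pro(E\cap F)$). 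What each approach buys: the paper's computation yields the explicit joint distribution of the whole ancestor configuration, which is more information and is self-contained; your argument is more conceptual, avoids the telescoping bookkeeping, and makes the reason for independence transparent (disjoint sources of randomness), so it adapts readily to related statistics driven by the same parent variables.
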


\begin{proof}
Each claim will follow easily once we show that $\mathcal{D}_n^w$ can be written as a sum of independent Bernoulli random variables. For this purpose, we first observe that in a given rooted tree, the depth of a node is equal to its number of ancestors, since these determine the path from the root to the node. Using that $1$ definitely is an ancestor of $n$, in the notation of the theorem we thus get 
\[\mathcal{D}_n^w = 1 + \sum_{i=2}^{n-1} A_{i,n}^w.
\]
We will first find the distribution law of the $A_{i,n}^w$ and then show mutual independence. For the distribution law we will use the method used in \cite{Feng05}: we first find the values for $n-1$ and $n-2$ and then proceed by induction.

Node $n-1$ can only be an ancestor of $n$ if it is the parent of $n$, so we get 
\[
\Pro(n-1 \in Z_n^w) = \frac{w_{n-1}}{\sum_{i=1}^{n-1} w_i}.
\]
Similarly $n-2$ can only be an ancestor of $n$ if it is the parent of $n$ or it is the grandparent of $n$, in which case $n-2$ needs to be the parent of $n-1$ who needs to be the parent of $n$. This gives
\begin{equation*}
\begin{split}
\Pro(n-1 \in Z_n^w) = \underbrace{\frac{w_{n-2}}{\sum_{i=1}^{n-1} w_i}}_{n-2 \text{ is parent of } n} + \underbrace{\frac{w_{n-2}}{\sum_{i=1}^{n-2} w_i}\frac{w_{n-1}}{\sum_{i=1}^{n-1} w_i}}_{n-2 \text{ is grandparent } of n} 
= \frac{w_{n-2}}{\sum_{i=1}^{n-2} w_i}.
\end{split}
\end{equation*}
We will now show by induction that  for all $j = 2, \dots, n-1$,
\[\Pro( j \in Z_n^w) = \frac{w_j}{\sum_{i=1}^{j} w_j}.
\]
Let the above be true for all $j \geq i+1$ and let $C_{i,j}^w$ denote the event that $j$ is a child of $i$. Then 
\[\Pro( i \in Z_n^w) = \sum_{j = i+1}^{n-1} \Pro( j \in Z_n^w, C_{i,j}^w) + \Pro(C_{i,n}^w).
\]

Since $C_{i,j}^w$ only relates to the $j^{th}$ step of the construction process and $j \in Z_n^w$ only depends on the $j+1^{th}, \dots, n^{th}$ step, these two events are independent. We thus get
\[\Pro(i \in Z_n^w) =  \sum_{j = i+1}^{n-1} \Pro( j \in Z_n^w)\Pro(C_{i,j}^w)+\Pro(C_{i,n}^w) = \sum_{j = i+1}^{n-1} \left (\frac{w_j}{\sum_{k=1}^{j}w_k} \frac{w_i}{\sum_{k=1}^{j-1}w_k} \right )+ \frac{w_i}{\sum_{j=1}^{n-1}w{j}}.
\]

To simplify this expression we first note that we can factor out $w_i$ and by some elementary operations get:
\begin{equation*}
\begin{split} 
 \frac{w_{i+1}}{\sum_{k=1}^{i+1}w_k\sum_{k=1}^{i}w_k} + \frac{w_{i+2}}{\sum_{k=1}^{i+2}w_k\sum_{k=1}^{i+1}w_k} = \frac{w_{i+1}+ w_{i+2}}{\sum_{k=1}^{i+2}w_k\sum_{k=1}^{i}w_k}.
\end{split}
\end{equation*}

In general, the following holds for $l \in \mathbb{N}$:
\begin{equation*}
\begin{split}
&\frac{w_{i+1}+ w_{i+2} + \cdots + w_{i+l}}{\sum_{k=1}^{i} w_k \sum_{k=1}^{i+l} w_k} + \frac{w_{i+l+1}}{\sum_{k=1}^{i+l+1} w_k \sum_{k=1}^{i+l} w_k} = \frac{w_{i+1}+  \cdots + w_{i+l+1}} {\sum_{k=1}^{i+l+1} w_k  \sum_{k=1}^{i} w_k}.
\end{split}
\end{equation*}

By using this equality $n-i-2$ times, we thus get 
\begin{equation*}
\begin{split}
\Pro(i \in Z_n^w) &= w_i \left ( \frac{w_{i+1} + \cdots + w_{n-1}}{\sum_{k=1}^{i}w_k \sum_{k=1}^{n-1}w_k} + \frac{1}{\sum_{k=1}^{n-1}w_k} \right ) = \frac{w_i}{\sum_{k=1}^{i} w_k}.
\end{split}
\end{equation*}
Now we will show that the events $A_{i,n}^w$ are mutually independent for $j = 2, \dots, n-1$. For this we will use the method used in \cite{Hoppe}: for any $2 \leq k \leq n-2$ and $2 \leq j_k < \dots < j_2 < j_1 \leq n-1$ consider the event that all $j_i$'s and only the $j_i$'s are ancestors of $n$. We will denote this event by $E$. Then
\[E := ( \1(j_i \in Z_n^w) =1, \1(j \in Z_n^w) = 0, \text{ for } j \neq j_i, i=1, \dots, k).
\]
By the structure of the recursive tree, to realize this event, $n$ must be a child of $j_1$, $j_1$ a child of $j_2, \dots , j_{k-1}$ a child of $j_k$ and $j_k$ a child of 1. In general for $i=1, \dots, k-1$, $j_i$ must be a child of $j_{i+1}.$  It does not matter what nodes  $j \neq j_i$ attach to. Hence, by the attachment probabilities we get:
\begin{equation*}
\begin{split}
\Pro(E) & = \Pro(j_i \in Z_n^w, j \notin Z_n^w, \text{ for } j \neq j_i, i=1, \dots, k) \\
&= \underbrace{\frac{w_{j_1}}{\sum_{\ell=1}^{n-1} w_{\ell}}}_{n \text{ child of } j_1} \prod_{i=1}^{k-1} \underbrace{\frac{w_{j_{i+1}}}{\sum_{\ell=1}^{j_i-1}w_{\ell}}}_{j_i \text{ child of } j_{i+1}} \underbrace{\frac{w_1}{\sum_{\ell=1}^{j_k-1}w_{\ell}}}_{j_k \text{ child of } 1} \\
&= w_1 w_{j_1} \cdots w_{j_k} \prod_{i=1}^{n-1} \frac{1}{\sum_{\ell=1}^{i}w_{\ell}}  \prod_{\substack{1<j<n \\ j \neq j_i, i=1, \dots, k}} \left (\sum_{\ell=1}^{j-1}w_{\ell} \right ) \\
& = \prod_{i=1}^{k}\frac{w_{j_i}}{\sum_{\ell=1}^{j_i}w_{\ell}} \prod_{\substack{1<j<n \\ j \neq j_i, i=1, \dots, k}} \left ( \frac{\sum_{\ell=1}^{j-1}w_{\ell}}{\sum_{\ell=1}^{j}w_{\ell}}\right )  \frac{w_1}{\sum_{\ell=1}^{1}{w_{\ell}}} \\
&= \prod_{i=1}^{k}\frac{w_{j_i}}{\sum_{\ell=1}^{j_i}w_{\ell}} \prod_{\substack{1<j<n \\ j \neq j_i, i=1, \dots, k}} \left ( 1 - \frac{w_j}{\sum_{\ell=1}^{j}w_{\ell}}\right )  \\
&= \prod_{i=1}^{k} \Pro(j_i \in Z_n^w) \prod_{\substack{1<j<n \\ j \neq j_i, i=1, \dots, k}} \Pro(j \notin Z_n^w).
\end{split}
\end{equation*}

This implies that the events $\1(i \in Z_n^w)$ are mutually independent. Hence the $A_{i,n}^w$ are mutually independent Bernoulli random variables. 

Expectation and variance formulas for  $\mathcal{D}_{n}^w$ follows from this observation right away. 

\hfill $\square$
\end{proof}

The following central limit theorem now follows. 

\begin{theorem}
If $\E[\mathcal{D}_n^w ] \rightarrow \infty$ and $\limsup_{n \to \infty} \frac{w_n}{\sum_{i=1}^{n} w_i} < 1$, then we have $$\frac{\mathcal{D}_n^w - \mathbb{E}[\mathcal{D}_n^w ]}{\sqrt{Var(\mathcal{D}_n^w )}} \longrightarrow_d \mathcal{G} \quad \text{as} \quad n \rightarrow \infty.$$
\end{theorem}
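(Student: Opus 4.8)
The plan is to invoke a Lindeberg–Feller type central limit theorem for triangular arrays of independent (non-identically distributed) Bernoulli random variables. From the previous theorem we already know that $\mathcal{D}_n^w = 1 + \sum_{i=2}^{n-1} A_{i,n}^w$ where the $A_{i,n}^w$ are independent Bernoullis with success probability $p_i := w_i / \sum_{j=1}^i w_j$. Writing $s_n^2 := \Var(\mathcal{D}_n^w) = \sum_{i=2}^{n-1} p_i(1-p_i)$, the task reduces to checking that the array $\{(A_{i,n}^w - p_i)/s_n\}$ satisfies the Lindeberg condition, or more conveniently the Lyapunov condition with third moments. Since each summand is bounded, a clean route is to verify the condition $\max_{2 \le i \le n-1} p_i(1-p_i) / s_n^2 \to 0$, which for bounded variables is equivalent to the Lindeberg condition (for instance via Lyapunov: $\sum_i \E|A_{i,n}^w - p_i|^3 \le \sum_i p_i(1-p_i) = s_n^2$ times the max of $|A_{i,n}^w - p_i| \le 1$, and one sharpens this to $\max_i p_i(1-p_i)$).

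The two hypotheses are exactly what is needed to control $s_n^2$ from below. First, $\E[\mathcal{D}_n^w] = 1 + \sum_{i=2}^{n-1} p_i \to \infty$. Second, $\limsup_n p_n < 1$ means there is $\delta > 0$ and $N$ with $p_i \le 1 - \delta$ for all $i \ge N$; also $p_i \le 1$ always, and in fact $p_i < 1$ for $i \ge 2$ since $w_1 > 0$. I would combine these as follows: for $i \ge N$ we have $p_i(1-p_i) \ge \delta p_i$, so $s_n^2 \ge \delta \sum_{i=N}^{n-1} p_i$, and since $\sum_{i=2}^{n-1} p_i = \E[\mathcal{D}_n^w] - 1 \to \infty$ while the initial segment $\sum_{i=2}^{N-1} p_i$ is a fixed finite constant, we get $s_n^2 \to \infty$. (One must also handle the first few terms $i = 2, \dots, N-1$: those contribute a nonnegative, bounded amount to $s_n^2$, which only helps.) Then $\max_i p_i(1-p_i) \le 1/4$ is bounded, so $\max_i p_i(1-p_i)/s_n^2 \le 1/(4 s_n^2) \to 0$, giving the Lindeberg condition and hence the CLT by the Lindeberg–Feller theorem.

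The main technical point — really the only one requiring care — is ensuring $s_n^2 \to \infty$ from the two hypotheses, and in particular noticing that $\E[\mathcal{D}_n^w] \to \infty$ alone is not enough (one could imagine $p_i \to 1$ making the variance terms $p_i(1-p_i)$ summable even though $\sum p_i$ diverges), which is precisely why the second hypothesis $\limsup_n w_n/\sum_{i=1}^n w_i < 1$ is imposed. Once $s_n^2 \to \infty$ is established, the rest is the standard bounded-array argument. I would state the Lindeberg–Feller theorem (or cite it) and then present the two short estimates above; no lengthy computation is involved.
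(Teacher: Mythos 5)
Your proposal is correct and follows essentially the same route as the paper: reduce to the sum of independent Bernoulli variables $A_{i,n}^w$, note that for such bounded summands the Lindeberg/Lyapunov condition reduces to $\Var(\mathcal{D}_n^w)\to\infty$, and derive the variance divergence from $\limsup_n p_n<1$ via $\Var(\mathcal{D}_n^w)\geq \varepsilon\sum_{i>N} p_i$ together with $\E[\mathcal{D}_n^w]\to\infty$. The only difference is that you spell out the Lindeberg verification that the paper subsumes in its citation of Lyapunov's theorem; the key estimate is identical.
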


\begin{proof}
By Liapounov's central limit theorem for sums of independent Bernoulli random variables, if $\Var({D}_n^w) \to \infty$, it holds that $\frac{\mathcal{D}_n^w - \mathbb{E}[\mathcal{D}_n^w ]}{\sqrt{Var(\mathcal{D}_n^w )}} \longrightarrow_d \mathcal{G} \quad \text{as} \quad n \rightarrow \infty$. Now let $p_n = \frac{w_n}{\sum_{j=1}^{n} w_j}$.
Since  $\limsup_{n \to \infty} p_n <1$, there is an $0<\varepsilon<1$ and an $N \in \mathbb{N}$ such that for all $n >N$, $ 1- p_n > \varepsilon$. 
Then \[\Var({D}_n^w) = \sum_{i=1}^{n} p_i(1-p_i) = \sum_{i=1}^{N} p_i(1-p_i) + \sum_{i=N+1}^{n} p_i(1-p_i) >  \varepsilon \sum_{i=N+1}^{n} p_i. \] Since we know that $\E[\mathcal{D}_n^w ] = \sum_{i=1}^{n} p_i \rightarrow \infty$ as $n \to \infty$ this implies that $\Var(\mathcal{D}_n^w) \to \infty$ as $n \to \infty$. 

\hfill $\square$
\end{proof}

\begin{example}
(i)  If the weights are limited from below and above, the expectation and variance of the depth of node $n$ will still be equal to $\mathcal{O}(\ln(n))$ asymptotically.

(ii) For the Hoppe tree, we write $\mathcal{D}_n^\theta$ for the depth of node $n$ and have 
\begin{equation*}
\begin{split}
&\E[\mathcal{D}_n^\theta] = 1+ \sum_{i=1}^{n-2} \frac{1}{\theta+1} = \log(n) + \mathcal{O}(1) \text{ and }\\
&\Var \left (\mathcal{D}_n^\theta \right ) = \sum_{i=1}^{n-2} \frac{\theta+i-1}{(\theta+i)^2} = \log(n) + \mathcal{O}(1).
\end{split}
\end{equation*}
Notice that the depth in a URT and a Hoppe tree asymptotically equivalent. The same conclusion also holds for a wide range of statistics whose dependence to the root is  small and vanishes asymptotically. This makes the asymptotic study of Hoppe trees slightly uninteresting. Another such example  is the number of leaves in a Hoppe tree which was studied earlier.

(iii) Let's mention one instance where the behavior of the depth is totally different than the URT case. Let $(w_i)_{i\in \mathbb{N}} = \left (\frac{1}{i^2} \right )_{i \in \mathbb{N}}$. Then $
\E[\mathcal{D}_n^w] =  \sum_{i=1}^{n-1} \frac{\frac{1}{i^2}}{\sum_{j=1}^{i}\frac{1}{j^2}}$,
and since for all $i \in \mathbb{N}$ we have $1<\sum_{j=1}^{i}\frac{1}{j^2}<\frac{\pi^2}{6}$, we get
\begin{equation*}
\begin{split}
  \frac{6}{\pi^2} \sum_{i=1}^{n-1} \frac{1}{i^2} \leq \E[\mathcal{D}_n^w] \leq \sum_{i=1}^{n-1} \frac{1}{i^2}
  \Longrightarrow \frac{6}{\pi^2}  \leq \E[\mathcal{D}_n^w] \leq \frac{\pi^2}{6}.
\end{split}
\end{equation*} Also, since  
$
\Var(\mathcal{D}_n^w) = \sum_{i=2}^{n-1} \frac{\frac{1}{i^2} \sum_{j=1}^{i-1} \frac{1}{j^2}}{\left (\sum_{j=1}^{i}\frac{1}{j^2}\right )^2},
$
and  for all $i\geq 2$, it holds that $\frac{4^2}{5^2} \leq \frac{\sum_{j=1}^{i-1} \frac{1}{j^2}}{\left (\sum_{j=1}^{i}\frac{1}{j^2}\right )^2}\leq 1$, we get
\begin{equation*}
\begin{split}
\frac{4^2}{5^2} \sum_{i=2}^{n-1} \frac{1}{i^2} \leq \Var(\mathcal{D}_n^w) \leq \sum_{i=2}^{n-1} \frac{1}{i^2} 
\Longrightarrow \frac{4}{5^2}  \leq \Var(\mathcal{D}_n^w) \leq \frac{\pi^2}{6} -1.
\end{split}
\end{equation*}
\end{example}

\section{Number of branches}\label{sec:branches}

Finally, we study the number of branches in a WRT. Our results are  summarized in the following theorem. 

\begin{theorem}
Let $\{w_n\}_{n \geq 1}$ be a sequence of positive weights.  Denote the number of branches in $\mathcal{T}_n^w$ by $\mathcal{B}_n^w$.  
\begin{itemize}
\item[(i)]  We have \begin{equation*} \mathbb{E}[\mathcal{B}_n^w] = \sum_{i=1}^{n-1} \frac{w_1}{\sum_{k=1}^i w_k}  \qquad \text{and} \qquad Var(\mathcal{B}_n^w) = \sum_{i=2}^{n-1} \frac{w_1}{\sum_{k=1}^i w_k}  \left ( 1-\frac{w_1}{ \sum_{k=1}^i w_k }\right ).
\end{equation*}
\item[(ii)] If $\E [\mathcal{B}_n^w]$ diverges, then the central limit $$\frac{\mathcal{B}_n^w - \mathbb{E}[\mathcal{B}_n^w]}{\sqrt{Var(\mathcal{B}_n^w)}} \longrightarrow_d \mathcal{G} \quad \text{as} \quad n \rightarrow \infty.$$ 

\item[(iii)] Further, one has $$d_W \left(\frac{\mathcal{B}_n^w - \mathbb{E}[\mathcal{B}_n^w]}{\sqrt{Var(\mathcal{B}_n^w)}}, \mathcal{G} \right) \leq \frac{1}{\sqrt{Var(\mathcal{B}_n^w)}} \frac{\sqrt{28} + \sqrt{\pi}}{\sqrt{\pi}}.$$
\end{itemize}
\end{theorem}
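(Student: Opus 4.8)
The plan is to represent $\mathcal{B}_n^w$ as a sum of independent Bernoulli random variables and then read off all three parts from standard facts about such sums. Concretely, for $1\le i\le n-1$, node $i+1$ is a child of the root exactly when it attaches to node $1$ at step $i+1$ of the construction, an event of probability $w_1/(w_1+\cdots+w_i)$ that depends only on that single step; hence, writing $X_i:=\1(\text{node }i{+}1\text{ attaches to the root})$, the variables $X_1,\dots,X_{n-1}$ are independent with $X_i\sim\mathrm{Bernoulli}\!\left(w_1/\sum_{k=1}^i w_k\right)$ (and $X_1\equiv 1$), and $\mathcal{B}_n^w=\sum_{i=1}^{n-1}X_i$. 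Part (i) is then immediate from linearity of expectation and from $\Var(\mathcal{B}_n^w)=\sum_{i=1}^{n-1}\Var(X_i)$, the $i=1$ term vanishing. This parallels the depth-of-node-$n$ theorem but is easier, since ``child of the root'' depends on a single step rather than a chain of attachments, so the mutual-independence verification there is not needed here.

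For part (ii), I would first check that $\E[\mathcal{B}_n^w]\to\infty$ forces $\Var(\mathcal{B}_n^w)\to\infty$: since the weights are positive, for every $i\ge 2$ we have $p_i:=w_1/\sum_{k=1}^i w_k\le w_1/(w_1+w_2)=:1-\varepsilon<1$, whence $\Var(\mathcal{B}_n^w)=\sum_{i\ge 2}p_i(1-p_i)\ge\varepsilon\sum_{i\ge 2}p_i=\varepsilon\bigl(\E[\mathcal{B}_n^w]-1\bigr)\to\infty$. Because the summands $X_i-p_i$ are uniformly bounded by $1$ while $\sqrt{\Var(\mathcal{B}_n^w)}\to\infty$, Lindeberg's condition holds trivially and the central limit theorem follows; alternatively (ii) is subsumed by (iii) once $\Var(\mathcal{B}_n^w)\to\infty$ is known.

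For part (iii), set $\sigma=\sqrt{\Var(\mathcal{B}_n^w)}$ and $W=(\mathcal{B}_n^w-\E[\mathcal{B}_n^w])/\sigma=\sum_{i=1}^{n-1}\xi_i$ with $\xi_i=(X_i-p_i)/\sigma$, so $\E W=0$, $\Var W=1$. I would invoke the quantitative Wasserstein normal approximation for a normalized sum of independent mean-zero random variables obtained by Stein's method: with $W_i=W-\xi_i$, one expands $\E[Wf(W)-f'(W)]=\sum_i\bigl(\E[\xi_i(f(W)-f(W_i))]-\E[\xi_i^2 f'(W_i)]\bigr)-\sum_i\E\xi_i^2\bigl(\E f'(W)-\E f'(W_i)\bigr)$ and bounds the two sums using the Stein factor estimates $\|f'\|_\infty\le\sqrt{2/\pi}$ and $\|f''\|_\infty\le 2$ for the Wasserstein Stein equation, producing a bound of the form $d_W(W,\mathcal{G})\le c_1\sum_i\E|\xi_i|^3+c_2\sqrt{\sum_i\E\xi_i^4}$. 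The elementary identities $\E|X_i-p_i|^3=p_i(1-p_i)\bigl((1-p_i)^2+p_i^2\bigr)\le p_i(1-p_i)$ and $\E(X_i-p_i)^4=p_i(1-p_i)\bigl((1-p_i)^3+p_i^3\bigr)\le p_i(1-p_i)$ then give $\sum_i\E|\xi_i|^3\le\sigma^{-3}\sum_i p_i(1-p_i)=\sigma^{-1}$ and $\sum_i\E\xi_i^4\le\sigma^{-2}$, so every term is $O(1/\sigma)$, and collecting the constants yields the stated bound $\frac{1}{\sqrt{\Var(\mathcal{B}_n^w)}}\cdot\frac{\sqrt{28}+\sqrt{\pi}}{\sqrt{\pi}}$.

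The conceptual content is concentrated entirely in setting up the independent-Bernoulli representation, after which (i) and (ii) are routine. The only delicate point is part (iii): the obstacle is not the structure of the argument but the bookkeeping — carrying the explicit Stein-factor constants through the expansion and matching them against the moment estimates so as to land exactly on the constant $(\sqrt{28}+\sqrt{\pi})/\sqrt{\pi}$ rather than a looser one.
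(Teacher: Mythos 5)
Your proposal is correct and follows essentially the same route as the paper: represent $\mathcal{B}_n^w$ as a sum of independent Bernoulli indicators, get (i) by linearity, deduce $\Var(\mathcal{B}_n^w)\to\infty$ from $\E[\mathcal{B}_n^w]\to\infty$ and apply a classical CLT for (ii), and obtain (iii) from the standard Stein's-method Wasserstein bound for sums of independent variables (which the paper simply cites from Ross's survey) combined with the Bernoulli moment estimates $\E|X_i-p_i|^3,\E(X_i-p_i)^4\le p_i(1-p_i)$. Your uniform bound $p_i\le w_1/(w_1+w_2)$ for $i\ge 2$ even streamlines the variance-divergence step, which the paper handles by a two-case analysis on whether $\sum_i w_i$ converges.
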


\begin{proof}
Letting $b_i = \1(\text{node $i$ attaches to node 1})$, observe that the $b_i$'s are independent and that $\E[b_i]= \frac{w_1}{\sum_{j=1}{i-1} w_j}$ . The formula for $\E[\mathcal{B}_n^w] = \sum_{i=2}^{n} \E[b_i]$ and $\Var(\mathcal{B}_n^w)= \sum_{i=2}^{n} \Var(b_i)$ is then clear.

The CLT then follows from Liapounov's central limit theorem. Let $\E[\mathcal{B}_n^w] \to \infty$. To show that this implies that $\Var(\mathcal{B}_n^w)  \to \infty$ too, we differentiate two cases:
\begin{itemize}
\item[(a)] If $\sum_{i=1}^{n} w_i \to c \in \mathbb{R}$ as $n \to \infty$, $ \frac{\sum_{i=2}^{n} w_i}{(\sum_{i=1}^{n} w_i)^2} \to \frac{c-w_1}{c^2}$ and hence $\Var(\mathcal{B}_n^w) $ diverges.
\item[(b)]  If $\sum_{i=1}^{n} w_i \to \infty$ as $n \to \infty$,this implies that $\frac{w_1}{\sum_{i=1}^{n} w_i} \to 0$ as $n \to \infty$. Hence there is an $N \in \mathbb{N}$ such that for all $n >N$, we have $\frac{w_1}{\sum_{i=1}^{n} w_i}< \frac{1}{2}.$ Then 
\[\Var(\mathcal{B}_n^w) > \sum_{i=N+1}^{n-1} \frac{w_1}{\sum_{k=1}^i w_k}  \left ( 1-\frac{w_1}{ \sum_{k=1}^i w_k }\right ) > \frac{1}{2} \sum_{i=N+1}^{n-1} \frac{w_1}{\sum_{k=1}^i w_k}  \to \infty.\]
\end{itemize}

The convergence rates can be obtained by using Theorem 3.1 in \cite{Ross}. 
\hfill $\square$
\end{proof}

\begin{remark}
(i) When $w_1 = \theta >  0$, and $w_i = 1$, $i = 1,2,\ldots$, we obtain $$\frac{\mathcal{B}_n - \theta \ln n}{\sqrt{\theta \ln n}} \longrightarrow_d \mathcal{G}, \quad \text{as} \quad n \rightarrow \infty.$$ In particular, when $\theta = 1$ as well, one recovers the central limit theorem for the URT case. 

(ii) When there exist $\alpha_1, \alpha_2$ so that $0 < \alpha_1 \leq \sup_i w_i \leq \alpha_2 < \infty$, it can be shown that both the expectation and the variance are still of order $n$. 

(iii) When $w_i$'s are not bounded, there can be big differences compared to the case of URTs. One such extreme case is when $w_i  =i$ where $\mathbb{E}[\mathcal{B}_n^w] \sim 2$, and $Var(\mathcal{B}_n^w) \sim 14 - \frac{4 \pi^2}{3}$. Another one is when $w_i = 1 / i$ in which case  $$\frac{6}{\pi^2} (n - 1) \leq \mathbb{E}[\mathcal{B}_n^w] \leq n -1, \qquad \text{and} \qquad \frac{6}{5\pi^2} (n - 2) \leq Var(\mathcal{B}_n^w) \leq \frac{1}{4} (n-2).$$

(iv) It is well known that the number of branches and the depth of node $n$ have the same distribution in a URT. This is not the case when the tree is non-uniform. Indeed, this is intuitively clear since having more branches increases the chance that node $n$ attaches to a node at a lower level.

\end{remark}

\end{document}